\documentclass[12pt]{amsart}
\usepackage{amscd,setspace,amssymb,amsopn,amsmath,amsthm,mathrsfs,lmodern,graphics,amsfonts,enumerate,verbatim,calc
} 
\usepackage[all]{xy}
\usepackage[colorlinks=true, citecolor=PineGreen, filecolor=PineGreen, linkcolor=Purple,pagebackref, hyperindex]{hyperref}
\usepackage[dvipsnames]{xcolor}
\usepackage{todonotes}
\usepackage{tikz-cd}
\usepackage{color, hyperref}
\usepackage[OT2,OT1]{fontenc}

\newcommand\cyr{%
\renewcommand\rmdefault{wncyr}%
\renewcommand\sfdefault{wncyss}
\renewcommand\encodingdefault{OT2}%
\normalfont
\selectfont}
\DeclareTextFontCommand{\textcyr}{\cyr}
\usepackage{amssymb,amsmath}
\DeclareFontFamily{OT1}{rsfs}{}	
\DeclareFontShape{OT1}{rsfs}{n}{it}{<-> rsfs10}{}
\DeclareMathAlphabet{\fmathscr}{OT1}{rsfs}{n}{it}
\topmargin=0in
\oddsidemargin=0in
\evensidemargin=0in
\textwidth=6.5in
\textheight=8.5in
\numberwithin{equation}{section}
\hyphenation{semi-stable}
\newtheorem{Theoremx}{Theorem}
 
\newtheorem{theorem}{Theorem}[section]
\newtheorem{lemma}[theorem]{Lemma}
\newtheorem{notation}[theorem]{Notation}
\newtheorem{proposition}[theorem]{Proposition}
\newtheorem{corollary}[theorem]{Corollary}

\theoremstyle{definition}
\newtheorem{definition}[theorem]{Definition}
\newtheorem{remark}[theorem]{Remark}
\theoremstyle{remark}

\newtheorem{example}[theorem]{Example}

\newcommand{\Spec}{\operatorname{Spec}}

\newcommand{\Ht}{\operatorname{ht}}
\newcommand{\Height}{\operatorname{ht}}

\newcommand{\HSL}{\operatorname{HSL}}
\newcommand{\Fte}{\operatorname{Fte}}

\newcommand{\Ext}{\operatorname{Ext}}

\newcommand{\Tor}{\operatorname{Tor}}

\newcommand{\Ann}{\operatorname{Ann}}

\newcommand{\depth}{\operatorname{depth}}

\newcommand{\N}{\mathbb{N}}

\newcommand{\fm}{\mathfrak{m}}
\newcommand{\fp}{\mathfrak{p}}
\newcommand{\fq}{\mathfrak{q}}

\newcommand{\fn}{\mathfrak{n}}

\begin{document}
\title{$F$-nilpotent rings and permanence properties}

\author[Jennifer Kenkel]{Jennifer Kenkel}
\address{Department of Mathematics, University of Kentucky, Lexington, KY 40508, USA}
\email{j.kenkel@uky.edu}

\author[Kyle Maddox]{Kyle Maddox}
\address{Department of Mathematics, University of Missouri, Columbia, MO 65211, USA}
\email{kylemaddox@mail.missouri.edu}

\author[Thomas Polstra]{Thomas Polstra}
\thanks{Polstra was supported by NSF Postdoctoral Research Fellowship DMS $\#1703856$.}
\address{Department of Mathematics, University of Utah, Salt Lake City, UT 84102 USA}
\email{polstra@math.utah.edu}

\author[Austyn Simpson]{Austyn Simpson}
\thanks{Simpson was supported by NSF RTG grant DMS $\# 1246844$.}
\address{Department of Mathematics, University of Illinois at Chicago, Chicago, IL 60607 USA}
\email{awsimps2@uic.edu}


\begin{abstract} We explore the singularity classes $F$-nilpotent, weakly $F$-nilpotent, and generalized weakly $F$-nilpotent under faithfully flat local ring maps. As an application, we show that the loci of primes in a Noetherian ring of prime characteristic which define either weakly $F$-nilpotent or $F$-nilpotent local rings are open with respect to the Zariski topology whenever $R$ is $F$-finite or essentially of finite type over an excellent local ring.
\end{abstract}

 \maketitle


\section{Introduction}
Let $(R,\fm,k)$ denote a commutative Noetherian local ring of prime characteristic $p>0$. For each $e\in \mathbb{N}$ let $F^e:R\to R$ denote the $e$th iterate of the Frobenius endomorphism of $R$. Iterates of the Frobenius endomorphism induce Frobenius linear maps of local cohomology 
\[
F^e:H^i_\fm(R)\to H^i_\fm(R)
\]
and behavior of these Frobenius actions are used to classify various prime characteristic singularity classes. Among these so-called \emph{$F$-singularities} include $F$-rational, $F$-injective, and $F$-nilpotent singularities, the last of which being the primary subject of study in this article.

A ring $R$ is weakly $F$-nilpotent if high enough iterates of Frobenius actions on the lower local cohomology modules with support in the maximal ideal are eventually $0$. Utilizing the language of \cite{Lyubeznik}, a local ring $(R,\fm,k)$ of dimension $d$ is weakly $F$-nilpotent if and only if its $F$-depth is equal to $d$.  Lyubeznik developed the notion of $F$-depth to answer a question of Grothendieck. In particular, it was shown that if $(S,\fm,k)$ is a regular local ring of Krull dimension $d$, prime characteristic $p>0$, and $I\subseteq S$ an ideal then $H^i_I(S)=0$ for all $0\leq i < d$ if and only if $R=S/I$ is weakly $F$-nilpotent, \cite[Theorem~1.1]{Lyubeznik}.

A local ring $(R,\fm,k)$ of dimension $d$ is called $F$-nilpotent if $R$ is weakly $F$-nilpotent and the nilpotent submodule of $H^d_\fm(R)$ agrees with the tight closure of the $0$-submodule. Srinivas and Takagi conjectured (and proved for dimension smaller than four, \cite[Proposition 3.8]{SrinivasTakagi})  that a normal isolated singularity $X$ defined over $\mathbb{C}$ has $F$-nilpotent type if and only if given a log resolution $\pi:Y\to X$ one has $H^i(E,\mathcal{O}_E)=0$ for all $i\geq 1$, where $E$ is the exceptional divisor of $\pi$.


The class of $F$-nilpotent singularities also deserves interest due to its relation to other classes of singularities. For instance, a reduced (alternatively, excellent and equidimensional) Noetherian local ring is $F$-rational if and only if it is $F$-injective and $F$-nilpotent. Further, $F$-rationality and $F$-injectivity are known to behave nicely in many aspects such as ascent and descent for sufficiently nice faithfully flat maps as well as openness of their respective loci (see e.g. \cite{Velez} and \cite{DattaMurayama}). Thus, it is natural to wonder if $F$-nilpotence is similarly well-behaved. Exploring this topic is the main content of this paper, and we collect a number of results along the way concerning the more general classes of \emph{weakly $F$-nilpotent} and \emph{generalized weakly $F$-nilpotent} rings.

Our first result is that the $F$-nilpotent locus of prime ideals is open with respect to the Zariski topology.

\begin{Theoremx}
\label{Open loci results}
Let $R$ be a ring of prime characteristic $p>0$ which is either $F$-finite or essentially of finite type over an excellent local ring. Then the following subsets of $\Spec(R)$ are open with respect to the Zariski topology:
\begin{enumerate}
\item $\{\fp\in \Spec(R)\mid R_\fp\mbox{ is weakly $F$-nilpotent}\};$
\item $\{\fp\in \Spec(R)\mid R_\fp\mbox{ is $F$-nilpotent}\}.$
\end{enumerate}
\end{Theoremx}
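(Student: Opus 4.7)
The plan is to apply the classical criterion that a subset of a Noetherian spectrum is open in the Zariski topology if and only if it is constructible and stable under generalization. I would verify each of these two conditions separately for both loci in (1) and (2), ultimately reducing the essentially finite type setting to the $F$-finite setting via the Hochster--Huneke $\Gamma$-construction and invoking the faithfully flat permanence results developed earlier in the paper.

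For stability under generalization, suppose $R_\fp$ is weakly $F$-nilpotent (respectively $F$-nilpotent) and $\fq \subseteq \fp$. Then $R_\fq$ is a localization of $R_\fp$, and I would show that either property transfers along such a localization. In the weakly $F$-nilpotent case, this amounts to the vanishing of sufficiently high Frobenius iterates on $H^i_{\fq R_\fq}(R_\fq)$ for $i < \dim R_\fq$, which can be controlled by factoring through a suitable faithfully flat extension of $R_\fq$ (for example its completion, or an appropriate $\Gamma$-construction) and then invoking the descent statements established earlier in the paper. The $F$-nilpotent case needs the additional transfer of the equality between the nilpotent submodule and the tight closure of zero in the top local cohomology, which is again amenable to the same permanence framework.

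For constructibility in the $F$-finite case, the key ingredient is the Hartshorne--Speiser--Lyubeznik stabilization theorem: for each $i$, the ascending chain $\ker(F) \subseteq \ker(F^2) \subseteq \cdots$ inside $H^i_\fm(R)$ stabilizes at some finite stage $e_0$. Consequently $R$ is weakly $F$-nilpotent if and only if $F^{e_0}$ acts as zero on $H^i_\fm(R)$ for all $i < \dim R$, a condition encoded by finitely many morphisms of coherent sheaf-theoretic data (exploiting that $F^{e_0}_\ast R$ is coherent in the $F$-finite setting). Generic freeness together with Chevalley's theorem on images of constructible sets then yields constructibility of the locus. For the $F$-nilpotent locus one additionally needs coincidence of the nilpotent submodule and the tight closure of zero in $H^{\dim R}_\fm(R)$; using the existence of uniform test elements and the standard reformulation of tight closure via Frobenius iterates, this should again reduce to the vanishing of finitely many maps, to which Chevalley applies.

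To pass from the $F$-finite case to the essentially finite type case, I would apply the $\Gamma$-construction to produce a faithfully flat, purely inseparable map $R \to R^\Gamma$ with $R^\Gamma$ $F$-finite, and then use the permanence theorems from earlier in the paper to show that the relevant loci in $\Spec R$ and $\Spec R^\Gamma$ correspond under the induced map of spectra, so that openness descends from the $F$-finite case. The most delicate step, I anticipate, is the constructibility portion of the $F$-nilpotent statement, since the tight closure of zero in the top local cohomology is notoriously difficult to control uniformly in families; the argument will likely require combining a uniform test element across a dense open stratum of $\Spec R$ with the Hartshorne--Speiser--Lyubeznik stabilization for the lower cohomologies, and coordinating these two pieces of data coherently is where the main technical friction should lie.
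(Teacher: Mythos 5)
Your high-level ingredients are exactly the paper's---Matlis duality against the dualizing complex, Hartshorne--Speiser--Lyubeznik stabilization, uniform test exponents, and the $\Gamma$-construction to reduce to the $F$-finite case---but the overall strategy (openness via constructibility plus stability under generalization) departs from the paper's, and each half of that split has a concrete gap as you sketch it.

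On stability under generalization: you propose transferring (weak) $F$-nilpotence from $R_\fp$ to $R_\fq$ for $\fq\subset\fp$ by ``factoring through a suitable faithfully flat extension\ldots and then invoking the descent statements.'' This mechanism cannot work as stated: the descent results (Theorem~\ref{theorem F-nilpotent descends}, etc.) pass the property from $S$ to $R$ along a faithfully flat local map $R\to S$, while $R_\fp\to R_\fq$ is a localization, not a faithfully flat local extension of $R_\fq$. Localization of these properties is a theorem in its own right (it is \cite[Corollary~5.17]{PolstraQuy}, which this paper in fact \emph{deduces} from Theorem~\ref{Theorem Open}; see the proof of Theorem~\ref{theorem F-nilpotent descends-non-CM}); it is not a formal consequence of the descent package, and assuming it at the outset risks circularity with the openness you are trying to prove.

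On constructibility: generic freeness and Chevalley's theorem are not the right tools here, and their applicability is far from clear. The modules $H^i_{\fp R_\fp}(R_\fp)$ are not finitely generated, so ``$F^{e_0}$ acts as zero'' is not directly a coherent condition on $\Spec R$; Chevalley concerns images of constructible sets under finite-type morphisms, and there is no such morphism in sight. The paper's crucial technical step, which you allude to but do not nail down, is to encode the Frobenius action coherently: with a normalized dualizing complex $\omega_R^\bullet$, the Frobenius on $H^i_{\fp R_\fp}(R_\fp)$ is the $R_\fp$-Matlis dual of the localization of the Cartier linear map $(F^e)^\vee$ on the finitely generated module $H^{-d+\Ht\fp}(\omega_R^\bullet)$ (Proposition~\ref{Proposition Frobenius is just Grothendieck Trace}). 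The Blickle--B\"ockle stabilization for coherent Cartier modules then produces a single $e_0$ controlling all localizations simultaneously, and the weakly $F$-nilpotent locus is literally the complement of the union of supports of the stabilized images in the relevant degrees---an open set outright, with no need for a constructibility/generalization decomposition. For the $F$-nilpotent case the paper likewise produces, from a completely stable test element and the uniform test exponent of Corollary~\ref{Finite test exponents for tight closure}, a single finitely generated quotient $M/N$ of $\omega_R$ whose support is closed and cuts out exactly the failure of $0^*_{H^d}=0^F_{H^d}$ (Lemmas~\ref{Lemma Frobenius closure of the 0 module} and~\ref{Lemma tight closure of the 0 module}). Once these realizations are in place, the loci are visibly open and your constructible-plus-generalization scaffolding becomes superfluous; conversely, without these realizations the constructibility half of your argument has no traction.

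Your plan for passing from $F$-finite to essentially finite type agrees with the paper: complete the base ring $A$ along a geometrically regular faithfully flat map, then apply the $\Gamma$-construction and the ascent results (Theorems~\ref{Theorem F-nilpotent acends} and~\ref{faithfully flat purely inseparable}) to transport the open loci.
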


The proof of Theorem~\ref{Open loci results} in the $F$-finite scenario utilizes that Frobenius actions on local cohomology are realized as the Matlis dual of Cartier linear maps of the dualizing complex (see \cite[Lemma 2.3]{SrinivasTakagi}). Outside of the $F$-finite scenario we must understand the behavior of Frobenius actions on local cohomology under faithfully flat extensions. To this end, we study ascent and descent properties of local rings which are either $F$-nilpotent, weakly $F$-nilpotent, or generalized weakly $F$-nilpotent. Our results in this direction are as follows:

\begin{Theoremx}
\label{Main Theorem Ascent}
Let $(R,\fm)\to (S,\fn)$ be a faithfully flat map of local rings of prime characteristic $p>0$. 

\begin{enumerate}

\item If $S$ is weakly $F$-nilpotent then $R$ is weakly $F$-nilpotent. 

\item If $S$ is $F$-nilpotent, then $R$ is $F$-nilpotent under either of the following additional assumptions:
\begin{itemize}
    \item $S$ is an excellent local ring or;
    \item The closed fiber $S/\fm S$ is Cohen-Macaulay.
\end{itemize}
\item If $R$ is weakly $F$-nilpotent and $S/\fm S$ is Cohen-Macaulay then $S$ is weakly $F$-nilpotent.
\item If $R$ is $F$-nilpotent, $R/\sqrt{0}$ and $S\otimes_R R/\sqrt{0}$ have a test element in common, and $R\to S$ has geometrically regular fibers, then $S$ is $F$-nilpotent.
\end{enumerate}
\end{Theoremx}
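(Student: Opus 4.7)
The proof of all four parts rests on the flat base change isomorphism $H^i_{\fm S}(S)\cong H^i_\fm(R)\otimes_R S$, which is Frobenius-equivariant, combined with the Grothendieck spectral sequence
\[
E_2^{p,q}=H^p_\fn(H^q_{\fm S}(S))\Longrightarrow H^{p+q}_\fn(S)
\]
arising from $\Gamma_\fn=\Gamma_\fn\circ\Gamma_{\fm S}$. Faithful flatness yields an $F$-equivariant injection $H^i_\fm(R)\hookrightarrow H^i_{\fm S}(S)$, so Frobenius is nilpotent on $H^i_\fm(R)$ if and only if it is nilpotent on $H^i_{\fm S}(S)$; this reduces every question about weak $F$-nilpotence of $R$ to a question about the column $q<\dim R$ of the $E_2$-page.

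For part (1) I would induct on $q$ to show Frobenius is nilpotent on $H^q_{\fm S}(S)$ for each $q<\dim R\leq \dim S$. At stage $q$ the inductive hypothesis ensures Frobenius is nilpotent with a uniform exponent on the targets $E_r^{r,q-r+1}$ of outgoing differentials from $E_r^{0,q}$, so a sufficiently high Frobenius iterate carries $E_2^{0,q}=\Gamma_\fn(H^q_{\fm S}(S))$ into $E_\infty^{0,q}$, which is a subquotient of $H^q_\fn(S)$ and hence $F$-nilpotent by hypothesis. The anticipated main obstacle is upgrading nilpotence from the $\fn$-torsion $\Gamma_\fn(H^q_{\fm S}(S))$ to all of $H^q_{\fm S}(S)$, which need not be $\fn$-torsion in general; to handle this step I would exploit the injection $H^q_\fm(R)\hookrightarrow H^q_{\fm S}(S)$ and use faithful flatness to reduce verification to the subimage coming from $R$, which is already enough for the original claim.

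For part (3), with $R$ weakly $F$-nilpotent and $S/\fm S$ Cohen--Macaulay, I would examine each $E_2^{p,q}$ with $p+q<\dim S$: for $q<\dim R$ the module $H^q_\fm(R)\otimes_R S$ has nilpotent Frobenius by hypothesis and faithful flatness, which passes to $H^p_\fn$; for $q=\dim R$ I would lift a regular system of parameters $\underline{y}$ of the Cohen--Macaulay fiber $S/\fm S$ to $S$, observing that flatness makes $\underline{y}$ an $S$-regular sequence and a regular sequence on $M:=H^{\dim R}_\fm(R)\otimes_R S$, whence $H^p_\fn(M)=H^p_{(\underline{y})}(M)=0$ for $p<\dim S-\dim R$; for $q>\dim R$ the module $H^q_{\fm S}(S)$ vanishes. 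Thus every $E_\infty^{p,q}$ with $p+q<\dim S$ is either nilpotent or zero and $H^{p+q}_\fn(S)$ inherits nilpotence, so $S$ is weakly $F$-nilpotent.

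Parts (2) and (4) upgrade weak $F$-nilpotence to full $F$-nilpotence by transferring the identity $0^F=0^*$ at the top local cohomology. In part (2) under the Cohen--Macaulay fiber hypothesis, the spectral sequence exhibits $H^{\dim S}_\fn(S)$ as an extension of a subobject of $M/\underline{y}M\cong H^{\dim R}_\fm(R)\otimes_R(S/\underline{y}S)$ by an $F$-nilpotent submodule; since $R\to S/\underline{y}S$ is again faithfully flat (flatness lifting through the regular sequence), both $0^F$ and $0^*$ pass along it, descending the equality from $S$ to $R$. The excellent case of (2) is reduced to the complete case by standard completion and dualizing-complex arguments. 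Part (4) first invokes (3), since geometrically regular fibers are in particular Cohen--Macaulay, to conclude that $S$ is weakly $F$-nilpotent; the shared test-element hypothesis together with geometric regularity of fibers then makes tight closure commute with the flat base change via a theorem of Hochster--Huneke, and combining this with the $F$-nilpotence of $R$ transfers $0^F=0^*$ to the top local cohomology of $S$. The central technical difficulty throughout parts (2) and (4) is precisely this interplay between tight closure and the flat extension, which is exactly why the additional hypotheses on fibers and common test elements are essential.
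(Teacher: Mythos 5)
Your spectral-sequence strategy is genuinely different from the paper's: the paper avoids the Grothendieck spectral sequence entirely and instead works element-wise through the Nagel--Schenzel identification of $H^i_\fm(R)$, filter regular sequences, and the relative Frobenius criterion of Polstra--Quy, after first reducing (via Lemma~\ref{lemma faithfully flat CM fiber}) to a map with $0$-dimensional closed fiber by modding out a regular sequence $\underline{T}$ coming from the Cohen--Macaulay fiber.

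There is a genuine gap in your argument for part (1). The inductive spectral-sequence argument establishes that a high Frobenius iterate kills $E_2^{0,q}=\Gamma_\fn\bigl(H^q_{\fm S}(S)\bigr)$, but the $F$-equivariant injection $H^q_\fm(R)\hookrightarrow H^q_{\fm S}(S)\cong H^q_\fm(R)\otimes_R S$ does \emph{not} land in $\Gamma_\fn\bigl(H^q_{\fm S}(S)\bigr)$ when $\dim S/\fm S>0$: a finite-length $R$-submodule $N\subseteq H^q_\fm(R)$ gives $N\otimes_R S$, which is $\fm S$-torsion but admits a filtration with graded pieces $S/\fm S$, and $S/\fm S$ is not $\fn$-power torsion. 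So ``reducing verification to the subimage coming from $R$'' does not exchange the obstacle for an easier one --- that subimage is precisely where the obstruction lives. The paper resolves this by a different mechanism: it first proves the CM-fiber case, then proves openness of the weakly $F$-nilpotent locus (Theorem~\ref{Theorem Open}), and finally localizes $S$ at a prime minimal over $\fm S$ to force a $0$-dimensional closed fiber (Theorem~\ref{theorem F-nilpotent descends-non-CM}), where $H^q_{\fm S}(S)=H^q_\fn(S)$ and the $\Gamma_\fn$ issue disappears.

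Two further points. For the excellent case of part (2), ``standard completion and dualizing-complex arguments'' papers over what is actually the other main theorem of the paper: one needs the $F$-nilpotent locus of $\Spec(S)$ to be open (or at least stable under localization at primes over $\fm S$), and proving that requires the Cartier-module stabilization machinery of Section~\ref{Section Dualizing complexes} and a $\Gamma$-construction; this is not an inessential technicality. For part (4), the decisive ingredient is the Radu--Andr\'e theorem (Theorem~\ref{Theorem Radu-Andre}): geometrically regular fibers are equivalent to faithful flatness of the relative Frobenius $F^e_{S/R}$, and the paper factors the Frobenius of $S$ through $F^e_{S/R}$ to pull a vanishing-after-multiplying-by-$c$ statement back along a faithfully flat map. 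Invoking ``tight closure commutes with flat base change'' is both imprecise (tight closure need not commute with arbitrary flat maps) and hides the actual mechanism. Your treatment of part (3), by contrast, is essentially sound: the vanishing $E_2^{p,\dim R}=0$ for $p<\ell$ via the regular sequence lifted from the Cohen--Macaulay fiber, the $F$-nilpotence of $E_2^{p,q}$ for $q<\dim R$ via factorization through the base-changed Frobenius of $R$, and the finite filtration of the abutment all check out, and give a legitimate alternative to the paper's element-wise proof of Theorem~\ref{Theorem F-nilpotent acends}(1).
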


We prove additionally that the property of being (weakly) $F$-nilpotent ascends under faithfully flat purely inseparable local ring homomorphisms; see Theorem~\ref{faithfully flat purely inseparable} for a precise statement. Moreover, we study the behavior of generalized weakly $F$-nilpotent rings under faithfully flat maps; see Theorem~\ref{theorem generalized weakly F-nilpotent faithfully flat map} for a precise statement.

The organization of the paper is as follows: In Section~\ref{Section preliminary} we discuss relevant background material needed throughout the article. In Section~\ref{Section Dualizing complexes} we utilize Cartier linear maps on dualizing complexes to begin our study of Frobenius actions on local cohomology. Of particular interest in this section is the existence of uniform bounds on the Frobenius test exponents of the localizations of a ring at prime ideals in the weakly $F$-nilpotent locus. The proofs of the ascent and descent statements in Theorem~\ref{Main Theorem Ascent} are gathered from the results of Section~\ref{Section flat base change results}, up to a Cohen-Macaulay assumption on the closed fiber. We also use Section~\ref{Section flat base change results} to further develop the theory of generalized weakly $F$-nilpotent rings as introduced by the second named author in \cite{Maddox}. The final section of the article, Section~\ref{Section open results}, is where we record a proof of Theorem~\ref{Open loci results}. Finally, we use our open loci results of Section~\ref{Section open results} to complete the proof of Theorem~\ref{Main Theorem Ascent}.

\section{Preliminary results}\label{Section preliminary}
In this section we recall basic facts about local cohomology and tight closure which will be essential in defining the singularity classes of interest, as well as in the proofs of our main theorems. We also give an overview of the recent literature concerning these singularities.

\subsection{Local cohomology and the Nagel-Schenzel isomorphism, and Frobenius actions}

Let $R$ be a commutative Noetherian ring, $I\subseteq R$ an ideal, and $M$ an $R$-module. We denote by $H^i_I(M)$ the $i$th local cohomology module of $M$ with support in $I$. If $(R,\fm,k)$ is local of Krull dimension $d$, $x_1,\ldots,x_d$ a system parameters, and $x=x_1\cdots x_d$ then the top local cohomology module is explicitly identified as a direct limit system:
\[
H^d_\fm(R)\cong \varinjlim \left(\frac{R}{(x_1^t,\ldots,x_d^t)}\xrightarrow{\cdot x}\frac{R}{(x_1^{t+1},\ldots,x_d^{t+1})}\right)
\]
Filter regular sequences allow us to identify lower local cohomologies with support in the maximal ideal as an explicit direct limit similar to the above.  Suppose $M$ is a finitely generated $R$-module. An element $x$ is a \emph{filter regular element} of $M$ if $(0:_Mx)$ has finite length. A sequence of elements $x_1,\ldots,x_i$ is a \emph{filter regular sequence} of $M$ if $x_j$ is a filter regular element of $M/(x_1,\ldots,x_{j-1})M$ for each $1\leq j\leq i$. A submodule $(0:_Mx)$ has finite length if and only if $x$ avoids all non-maximal associated primes of $M$. Therefore a simple prime avoidance argument can be used to show that $R$ admits a filter regular sequence on $M$ of length $\dim(M)$. The Nagel-Schenzel isomorphism stated below allows us to identify lower local cohomology modules with support in the maximal ideal as submodules of the top local cohomology on an ideal generated by a filter regular sequence.

\begin{theorem}
[{\cite[Proposition~3.4]{NagelSchenzel}}]
\label{Nagel-Schenzel} Let $(R,\fm,k)$ be a local ring of Krull dimension $d$. Suppose $x_1,\ldots,x_i$ is a filter regular sequence of length $i$ and $x=x_1\cdots x_i$, then
\[
H^i_\fm(R)\cong H^0_\fm(H^i_{(x_1,\ldots,x_i)}(R))\cong \varinjlim\left(\frac{(x_1^t,\ldots,x_i^t):_R\fm^\infty}{(x_1^{t},\ldots,x_i^{t})}\xrightarrow{\cdot x}\frac{(x_1^{t+1},\ldots,x_i^{t+1}):_R\fm^\infty}{(x_1^{t+1},\ldots,x_i^{t+1})}\right).
\]
\end{theorem}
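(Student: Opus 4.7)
The plan is to deduce the first isomorphism from the Grothendieck spectral sequence of composed functors, and then read off the second isomorphism from the standard direct-limit description of top local cohomology. Writing $J=(x_1,\ldots,x_i)$, the inclusion $J\subseteq \fm$ gives the identity of functors $\Gamma_\fm\circ \Gamma_J=\Gamma_\fm$, so there is a convergent spectral sequence
$$E_2^{p,q}=H^p_\fm(H^q_J(R))\Longrightarrow H^{p+q}_\fm(R).$$
The crux will be to show $H^q_J(R)$ is $\fm$-torsion for every $q<i$. Granting this, any $\fm$-torsion module $N$ is $\Gamma_\fm$-acyclic: picking a system of parameters $y_1,\ldots,y_d$ for $\fm$, each $N_{y_j}$ vanishes, so the \v{C}ech complex computing $H^\bullet_\fm(N)$ is concentrated in degree zero. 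Thus $E_2^{p,q}=0$ whenever $p>0$ and $q<i$.

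The main obstacle is the $\fm$-torsion claim. The plan is to show that for each non-maximal prime $\fp$, the images $x_1/1,\ldots,x_i/1$ either generate the unit ideal in $R_\fp$ or form a regular sequence there. By definition of filter regularity, each colon module $(0:_{R/(x_1,\ldots,x_{j-1})}x_j)$ has finite length and hence vanishes upon localization at any $\fp\neq \fm$; an induction on $j$ then produces the asserted regular sequence. Consequently $\grade(JR_\fp,R_\fp)\geq i$, so $H^q_J(R)_\fp=H^q_{JR_\fp}(R_\fp)=0$ for all $q<i$ and all $\fp\neq \fm$. Since $H^q_J(R)$ is therefore supported only at $\fm$, a standard argument (any nonzero element $m$ has a minimal prime of $\Ann(m)$ in the support) upgrades this to the conclusion that $H^q_J(R)$ is $\fm$-torsion.

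Inspecting total degree $i$ in the spectral sequence, the only potentially surviving term is $E_2^{0,i}=H^0_\fm(H^i_J(R))$: the outgoing differentials $d_r\colon E_r^{0,i}\to E_r^{r,i-r+1}$ with $r\geq 2$ land in groups that already vanish on the $E_2$-page (since $i-r+1<i$), and no incoming differentials come in from the left half-plane. Hence $E_\infty^{0,i}=E_2^{0,i}$, yielding $H^i_\fm(R)\cong H^0_\fm(H^i_J(R))$. For the final isomorphism, one uses the \v{C}ech presentation
$$H^i_J(R)\cong \varinjlim_t R/(x_1^t,\ldots,x_i^t)$$
with transition maps equal to multiplication by $x=x_1\cdots x_i$. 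Since each $R/\fm^n$ is finitely presented, $H^0_\fm(-)=\varinjlim_n \Hom_R(R/\fm^n,-)$ commutes with filtered colimits, and the identification $H^0_\fm(R/(x_1^t,\ldots,x_i^t))=((x_1^t,\ldots,x_i^t):_R\fm^\infty)/(x_1^t,\ldots,x_i^t)$ completes the proof.
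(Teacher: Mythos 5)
The paper does not prove this result; it cites Nagel--Schenzel directly, so there is no in-text argument to compare against. Your proof is correct and self-contained, and it proceeds by the standard modern route. One small point worth making explicit: to get $\grade(JR_\fp,R_\fp)\geq i$ for non-maximal $\fp$ containing $J=(x_1,\ldots,x_i)$, you need $x_1,\ldots,x_i$ to be an honest regular sequence in $R_\fp$, not merely that each colon vanishes locally; this follows by the induction you sketch because the partial ideals $(x_1,\ldots,x_{j-1})R_\fp$ remain proper (they are contained in $JR_\fp\subsetneq R_\fp$), so each step really is a nonzerodivisor. With that noted, the remaining steps --- $\Gamma_\fm\circ\Gamma_J=\Gamma_\fm$ and the existence of the Grothendieck spectral sequence (valid since $\Gamma_J$ preserves injectives over a Noetherian ring), the $\fm$-torsion conclusion for $H^q_J(R)$ with $q<i$ via support, the $\Gamma_\fm$-acyclicity of $\fm$-torsion modules by the \v{C}ech complex, the collapse of the spectral sequence in total degree $i$ to $E_2^{0,i}$, the \v{C}ech/colimit presentation of $H^i_J(R)$ with transition maps $\cdot x$, and the commutation of $\Gamma_\fm$ with filtered colimits because each $R/\fm^n$ is finitely presented --- are all sound. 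This gives a complete and independent proof of the cited isomorphism at the stated level of generality.
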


We will repeatedly use the following lemma whose content is that a filter regular sequence is preserved under a faithfully flat extension of local rings with $0$-dimensional closed fiber. This fact in combination with Theorem~\ref{Nagel-Schenzel} will be useful when establishing the permanence properties of Section~\ref{Section flat base change results}.

\begin{lemma}
\label{Lemma preserve filter regular sequence}
Let $(R,\fm)\to (S,\fn)$ be a faithfully flat local ring map with $0$-dimensional closed fiber $S/\fm S$. If $x_1,\ldots, x_d$ is a filter regular sequence in $R$ then $x_1,\ldots, x_d$ is a filter regular sequence in $S$.
\end{lemma}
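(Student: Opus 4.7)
The plan is to induct on the length of the sequence, with the $d=1$ case carrying all the content and the inductive step being a formal base change. For the inductive step, assume I have already established that $x_1,\ldots,x_{j-1}$ is a filter regular sequence in $S$. Then the induced map $R/(x_1,\ldots,x_{j-1}) \to S/(x_1,\ldots,x_{j-1})S$ is faithfully flat by base change, and its closed fiber is still $S/\fm S$, which remains $0$-dimensional. Applying the base case of the lemma to the element $x_j$ and this new faithfully flat extension yields that $x_j$ is filter regular on $S/(x_1,\ldots,x_{j-1})S$.

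For the base case, I would show: if $x\in R$ satisfies $\length_R(0:_R x)<\infty$, then $\length_S(0:_S x)<\infty$. By flatness of $R\to S$, there is a natural isomorphism $(0:_S x) \cong (0:_R x)\otimes_R S$, which is a finitely generated $S$-module since $(0:_R x)$ is finitely generated over $R$. Since $(0:_R x)$ is supported only at $\fm$, the $S$-module $(0:_R x)\otimes_R S$ is supported only on primes of $S$ containing $\fm S$. The hypothesis $\dim S/\fm S=0$ together with $S$ being local forces $V(\fm S) = \{\fn\}$, so $(0:_S x)$ is a finitely generated $S$-module supported only at the maximal ideal, hence of finite length. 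Alternatively, one could filter $(0:_R x)$ by copies of the residue field $k = R/\fm$ and observe that after tensoring with $S$ each composition factor becomes $S/\fm S$, an Artinian $S$-module of finite length, so the tensor product inherits finite length.

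The only obstacle is the essentially cosmetic one of verifying that the hypotheses pass to the quotient at each inductive stage; once one confirms that faithful flatness and $0$-dimensionality of the closed fiber are both preserved under modding out by the previously handled filter regular elements, the proof reduces cleanly to the single-element base case above.
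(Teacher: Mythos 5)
Your proposal is correct and takes essentially the same approach as the paper: both reduce to the observation that colon modules are preserved by flat base change and that the finite-length condition survives because the closed fiber is zero-dimensional. The one difference is that you spell out why finite length is preserved (via the support argument or the filtration by copies of $k$), whereas the paper asserts this; you also phrase the argument as an induction, though in fact the inductive hypothesis is never used and each index $j$ can be handled independently by your base-case argument applied to $R/(x_1,\ldots,x_{j-1}) \to S/(x_1,\ldots,x_{j-1})S$.
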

\begin{proof}
The sequence $x_1,\ldots,x_d$ is a filter regular sequence if and only if the colon ideals $((x_1,\ldots, x_i):_Rx_{i+1})/(x_1,\ldots, x_i)$ of $R/(x_1,\ldots, x_i)$ has finite length for each $1\leq i\leq d-1$. Base changing by a faithfully flat map preserves colon ideals. Because $S/\fm S$ is $0$-dimensional the images of $x_1,\ldots, x_d$ satisfy the same finite length criterion in $S$ and therefore $x_1,\ldots, x_d$ is a filter regular sequence of $S$.
\end{proof}

\subsection{Frobenius actions on local cohomology, Frobenius closure, and tight closure} 

Throughout this subsection suppose that $(R,\fm,k)$ is a $d$-dimensional local ring of prime characteristic $p>0$ and $F^e: R\to R$ is the $e$th iterate of the Frobenius endomorphism. Then $F^e$ induces $p^e$-linear maps $F^e: H^i_\fm(R)\to H^i_\fm(R)$, see \cite[Lecture~21]{24Hours}. Equivalently, the Frobenius endomorphism induces $R$-linear maps $H^i_\fm(R)\to F^e_*H^i_\fm(R)$ where $F^e_*H^i_\fm(R)$ is the Frobenius pushforward of $H^i_\fm(R)$. Suppose that $x_1,\ldots,x_i$ a filter regular sequence of $R$ and identify $H^i_\fm(R)$ as in Theorem~\ref{Nagel-Schenzel}. If $\gamma\in H^i_\fm(R)$ is represented by the class of $\eta+(x_1^t\ldots,x_i^t)$ in the above direct limit system then the $e$th Frobenius action of $R$ sends $\gamma$ to the element of $H^i_\fm(R)$ represented by $(\eta^{p^e}+(x_1^{tp^e},\ldots,x_i^{tp^e}))$. 


Frobenius actions on local cohomology modules are often used to classify prime characteristic singularities (see \cite{BlickleBondu, EnescuHochster,Maddox,PolstraQuy,SmithFrational}). This article concerns itself with the following singularity classes defined in terms of Frobenius actions on local cohomology:
\begin{enumerate}
\item $F$-nilpotent, \cite{BlickleBondu};
\item weakly $F$-nilpotent, \cite{PolstraQuy};
\item generalized weakly $F$-nilpotent, \cite{Maddox}.
\end{enumerate}

We will be interested in the behavior of the above singularity classes under flat base change in Section~\ref{Section flat base change results}. Understanding the algebraic technicalities of local cohomology modules under the identifications of Theorem~\ref{Nagel-Schenzel} will allow us to do just that. To define these classes of singularities, we first recall the basic notions of tight closure theory. See \cite{HochsterHunekeJAMS} for a thorough treatment.




Let $R^\circ$ be the complement of the union of the minimal primes of $R$ and let $F^e(-)=-\otimes_R F^e_* R$ denote the $e$th Frobenius pullback functor, i.e. base change along the $e$th iterate of Frobenius. If $N\subseteq M$ are $R$-modules then we say an element $m\in M$ is in $N^*_M$, the tight closure of $N$, if there exists a $c\in R^\circ$ so that $m$ is in the kernel of the following composition of maps for all $e\gg 0$:
\[
M\to M/N \to F^e(M/N)\xrightarrow{\cdot c} F^e(M/N).
\]
Similarly, we say $m\in M$ is in $N^F_{M}$, the Frobenius closure of $N$, if $m$ is in the kernel of the following maps for all $e\gg 0$:
\[
M\to M/N \to F^e(M/N).
\]
Observe that there are inclusions $N\subseteq N^F_M\subseteq N^*_M$.

The top local cohomology module with support in the maximal ideal enjoys the following property: $F^e(H^d_\fm(R))\cong H^d_\fm(R)$ and $H^d_\fm(R)\to F^e(H^d_\fm(R))$ is identified with the $e$th Frobenius action of the top local cohomology module. In particular, if $\gamma \in H^d_\fm(R)$ then $\gamma\in 0^*_{H^d_\fm(R)}$ if and only if there exists a $c\in R^\circ$ so that $\gamma$ is in the kernel of the following composition of $R$-linear maps for all $e\gg 0$:
\[
H^d_\fm(R)\xrightarrow{F^e} F^e_*H^d_\fm(R)\xrightarrow{\cdot F^e_*c} F^e_*H^d_\fm(R).
\] 
Similarly, $\gamma\in 0^F_{H^d_\fm(R)}$ if and only if $\gamma$ is the kernel of the following maps for all $e\gg 0$:
\[
H^d_\fm(R)\xrightarrow{F^e} F^e_*H^d_\fm(R).
\]

\begin{definition}
Let $(R,\fm,k)$ be a local ring of prime characteristic $p>0$ and dimension $d$. We say that $R$ is \emph{weakly $F$-nilpotent} if $0^{F}_{H^i_\fm(R)}=H^i_\fm(R)$ for all $0\leq i <d$. If $R$ is weakly $F$-nilpotent and $0^*_{H^d_\fm(R)}=0^F_{H^d_\fm(R)}$ then we say that $R$ is \emph{$F$-nilpotent}.
\end{definition}

An equivalent description of weakly $F$-nilpotent rings is as follows: suppose $x_1,\ldots,x_i$ is a filter regular sequence of length $i$, $x=x_1\cdots x_i$, and identify $H^i_\fm(R)$ as in Theorem~\ref{Nagel-Schenzel}. By definition, $R$ is weakly $F$-nilpotent if and only if for every $\eta\in (x_1,\ldots,x_i):_R\fm^\infty$ and $e\gg 0$ there exists $t\in\N$ so that $\eta^{p^e}x^{tp^e}\in (x_1^{(t+1)p^e},\ldots,x_i^{(t+1)p^e})$. Moreover, one observes by chasing through the direct limit identifications of $H^d_\fm(R)$ that $R$ is $F$-nilpotent if and only if $R$ is weakly $F$-nilpotent and satisfies the following property: for every system of parameters $x_1,\dots, x_d$, every $\gamma\in R$, $c\in R^\circ$, then
\begin{align*}
    \left(\text{for every } e\gg 0\text{ there exists } s\in\N \text{ s.t } c\gamma^{p^e}x^{sp^e}\in(x_1^{(s+t)p^e},\dots,x_d^{(s+t)p^e})\right)\Rightarrow\\
    \left(\text{for every } e\gg 0\text{ there exists } s\in\N \text{ s.t } \gamma^{p^e}x^{sp^e}\in(x_1^{(s+t)p^e},\dots,x_d^{(s+t)p^e})\right).
\end{align*}

There is one further class of singularities of interest in this article. It is proven in \cite{Quy} that the Frobenius test exponent is finite for weakly $F$-nilpotent rings. The second named author refined this result by isolating the salient features of weakly $F$-nilpotent rings used in Quy's proof. Indeed, it is shown in \cite[Theorem~3.1 and Theorem~3.6]{Maddox} that the following class of singularities has finite Frobenius test exponent.

\begin{definition}
A local $d$-dimensional ring $(R,\fm,k)$ of prime characteristic $p>0$ is called \emph{generalized weakly $F$-nilpotent} if for each $0\leq i < d$ the $R$-module $H^i_\fm(R)/0^{F}_{H^i_\fm(R)}$ has finite length.
\end{definition}
In this article we characterize generalized weakly $F$-nilpotent rings as local rings which are weakly $F$-nilpotent on the punctured spectrum in Proposition~\ref{proposition gwfn = wfn on punctured spectrum}.  


\subsection{Relative Frobenius Actions}
\label{Subsection relative Frobenius}

Let $R\to S$ be a homomorphism of rings of prime characteristic $p>0$. We denote by $F^e_{S/R}$ the map which fills in the following cocartesian diagram:
\begin{center}
\begin{tikzcd}
R \arrow[r, "F^e"] \arrow[d]
& F^e_*R \arrow[d] \arrow[ddrr, bend left] \\
S \arrow[r] \arrow[drrr, bend right, "F^e"']
& F^e_*R\otimes_R S \arrow[drr, dashed,"{F^e_{S/R}}" description]\\ 
&&& F^e_*S
\end{tikzcd}
\end{center}

The Radu-Andr\'e Theorem equates faithful flatness of $F^e_{S/R}$ with the property that $R\to S$ is faithfully flat with geometrically regular fibers, a theorem we record for future reference.

\begin{theorem}
[{\cite[Theorem~4]{RaduRelative}, \cite[Theorem~1]{AndreRelative}}]
\label{Theorem Radu-Andre}
Let $R\to S$ be a homomorphism of rings of prime characteristic $p>0$. Then $R\to S$ is faithfully flat with geometrically regular fibers if and only if the relative Frobenius map $F^e_{S/R}$ is faithfully flat for all $e\in \mathbb{N}$.
\end{theorem}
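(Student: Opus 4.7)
The plan is to use Kunz's theorem (a Noetherian ring of characteristic $p>0$ is regular if and only if its absolute Frobenius is flat) together with the base-change compatibility $F^e_{S'/R'}\cong F^e_{S/R}\otimes_R R'$ of the relative Frobenius to reduce the problem to the absolute case over a field. In that setting, one needs to establish the following local version: for a $\kappa$-algebra $A$, the relative Frobenius $F^e_{A/\kappa}\colon F^e_*\kappa\otimes_\kappa A\to F^e_*A$ is faithfully flat if and only if $A$ is geometrically regular over $\kappa$. Identifying the source with $\kappa^{1/p^e}\otimes_\kappa A$ and the target with $A^{1/p^e}$, the map $F^e_{A/\kappa}$ becomes the natural inclusion of subrings of $A^{1/p^e}$; geometric regularity of $A$ is exactly regularity of $\kappa^{1/p^e}\otimes_\kappa A$, Kunz applied to $A$ gives that $A\to A^{1/p^e}$ is flat, and combining these through the factorization $A\hookrightarrow\kappa^{1/p^e}\otimes_\kappa A\hookrightarrow A^{1/p^e}$ forces faithful flatness of the middle inclusion.

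For the global forward direction, I would fix a prime $\fp\in\Spec R$ and base change $F^e_{S/R}$ to the fiber $k(\fp)$ to reduce to the absolute case applied to $A=S\otimes_R k(\fp)$, which is geometrically regular by assumption. The local criterion of flatness, combined with the faithful flatness of $R\to S$ supplying a flat base, then lifts fiberwise flatness of $F^e_{S/R}$ to flatness of $F^e_{S/R}$ over $F^e_*R\otimes_R S$; faithfulness is inherited from surjectivity on $\Spec$.

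For the converse, I would first deduce faithful flatness of $R\to S$ itself. The absolute Frobenius on $S$ fits into the factorization $S\to F^e_*R\otimes_R S\xrightarrow{F^e_{S/R}}F^e_*S$, and composing with the integral map $R\to F^e_*R$ (which is a homeomorphism on spectra) yields that $R\to S$ is faithfully flat; this is the delicate descent step outlined below. Once $R\to S$ is known to be faithfully flat, base changing $F^e_{S/R}$ along each map $R\to F^e_*k(\fp)$ and identifying the result with the absolute Frobenius of $S\otimes_R F^e_*k(\fp)$ forces regularity of this ring by the converse direction of Kunz, and letting $e$ vary gives geometric regularity of the fiber $S\otimes_R k(\fp)$.

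The main obstacle is the subtle extraction of faithful flatness of $R\to S$ from faithful flatness of $F^e_{S/R}$, since a naive attempt is circular: $F^e_*R\otimes_R S$ inherits its structure from $R\to S$ itself, so flatness of one of the factors of the absolute Frobenius cannot a priori be separated from flatness of $R\to S$. The original proofs of Radu and Andr\'e handle this by Noetherian approximation of $S$ by finitely generated $R$-subalgebras combined with a careful $\Tor$-vanishing analysis, which is also where the non-Noetherian generality of the statement imposes its chief technical burden.
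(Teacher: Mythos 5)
The paper does not prove this theorem; it is stated as a black box with citations to Radu and Andr\'e, so there is no ``paper's proof'' to compare against.

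Evaluating your sketch on its own merits: the overall reduction (base-change compatibility of relative Frobenius, reduction to the fiber over a field, appeal to Kunz) is the classical architecture, and you correctly flag that the converse direction requires a nontrivial approximation/$\Tor$-vanishing argument in the non-Noetherian setting. However, the central step in your field case has a genuine logical gap. You write that flatness of the composite $A\hookrightarrow\kappa^{1/p^e}\otimes_\kappa A\hookrightarrow A^{1/p^e}$ (Kunz applied to $A$) together with flatness of the first inclusion ``forces faithful flatness of the middle inclusion.'' This is not a valid deduction: if $f\colon X\to Y$ and $g\colon Y\to Z$ are ring maps with $f$ and $g\circ f$ both (faithfully) flat, $g$ need not be flat --- consider $k\to k[x]\to k[x]/(x)$. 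The actual argument must apply Kunz to the ring $B=\kappa^{1/p^e}\otimes_\kappa A$ itself (which is regular precisely because $A$ is \emph{geometrically} regular, not merely regular), and then disentangle the absolute Frobenius of $B$ from the base-changed relative Frobenius $F^e_{A/\kappa}$ via a further factorization and either a local flatness criterion or a descent-along-faithfully-flat argument. As written, your sketch invokes geometric regularity only to identify the middle ring as regular but never uses that regularity; the Kunz input you cite is the weaker statement that $A$ alone is regular, and that is not enough.
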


We will often employ another notion of relative Frobenius action when using the Nagel-Schenzel isomorphism. Let $I\subseteq R$ be an ideal of a local ring $(R,\fm,k)$ of prime characteristic $p>0$. Then for each $e\in\N$ the Frobenius map $F^e: R/I\to R/I$ can be factored through a Frobenius linear map relative to $R$:
\[
R/I\xrightarrow{F^e_R}R/I^{[p^e]}\to R/I.
\]

The first map is given by $F^e_R(a+I)=a^{p^e}+I^{[p^e]}$ and the second map is the natural projection. Note that we are suppressing the reference to the ideal $I$ in this notation. Relative Frobenius maps induce Frobenius linear maps of local cohomology modules:
\[
H^i_\fm(R/I)\xrightarrow{F^e_R}F^e_*H^i_\fm(R/I^{[p^e]}).
\]

We now discuss the notion of relative tight closure of a local cohomology module as defined in \cite{PolstraQuy}. Suppose $R/I$ is $d$-dimensional. An element $\gamma\in H^d_\fm(R/I)$ is an element of $0^{*_R}_{H^d_\fm(R/I)}$, \emph{the tight closure of $0$ relative to $R$}, if there exists $c\in R^\circ$ so that $\gamma$ is in the kernel of the composition
\[
H^d_\fm(R/I)\xrightarrow{F^e_R}H^d_\fm(R/I^{[p^e]})\xrightarrow{\cdot F^e_*c}H^d_\fm(R/I^{[p^e]})
\]
for all $e\gg 0$. Similarly, $\gamma\in H^d_\fm(R/I)$ is an element of $0^{F_R}_{H^d_\fm(R/I)}$, \emph{the Frobenius closure of $0$ relative to $R$}, if $\gamma$ is in the kernel of the following maps for all $e\gg0$:
\[
H^d_\fm(R/I)\xrightarrow{\cdot F^{e}_R}H^d_\fm(R/I^{[p^e]}).
\]
We say $R/I$ is \emph{weakly $F$-nilpotent relative to $R$}\footnote{We use this terminology in order to be consistent with \cite{PolstraQuy}.} if for each $0\leq i<d$ and $\gamma\in H^i_\fm(R/I)$ there exists $e\gg 0$ so that $F^e_R(\gamma)$ is the $0$-element of $H^i_\fm(R/I^{[p^e]})$. We say $R/I$ is \emph{$F$-nilpotent relative to $R$} if $R/I$ is weakly $F$-nilpotent relative to $R$ and $0^{F_R}_{H^d_{\fm}(R/I)}=0^{*_R}_{H^d_{\fm}(R/I)}$. The third named author and Pham Hung Quy utilized relative Frobenius actions and filter regular sequences to provide the following characterizations of weakly $F$-nilpotent and $F$-nilpotent rings:


\begin{theorem}[{\cite[Theorem~5.9]{PolstraQuy}}]
\label{Theorem criterion of F-nilpotence}
Let $(R,\fm,k)$ be an excellent equidimensional local ring of prime characteristic $p>0$ and dimension $d$. Let $x_1,\ldots,x_i$ be a filter regular sequence of length $i$. Then 
\begin{enumerate}
\item $R$ is weakly $F$-nilpotent if and only if $R/(x^N_1,\ldots,x^N_i)$ is weakly $F$-nilpotent relative to $R$ for all $N\in \mathbb{N}$;
\item $R$ is $F$-nilpotent if and only if $R/(x^N_1,\ldots,x^N_i)$ is $F$-nilpotent relative to $R$ for all $N\in \mathbb{N}$.
\end{enumerate}
\end{theorem}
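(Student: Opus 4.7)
The plan is to use the Nagel-Schenzel isomorphism (Theorem~\ref{Nagel-Schenzel}) to identify the local cohomology modules of $R$ with direct limits of local cohomology modules of the quotients $R/I_N$, where $I_N=(x_1^N,\ldots,x_i^N)$, then transfer the (weak) $F$-nilpotence hypotheses across these identifications. Since powers of a filter regular sequence are still filter regular, replacing $x_j$ by $x_j^N$ reduces both directions to the case $N=1$; set $I=I_1$ for the remainder.

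First I would establish two Frobenius-compatible colimit descriptions. For each $0\leq j\leq i$, Nagel-Schenzel applied to the filter regular sequence $x_1,\ldots,x_j$ immediately yields
\[
H^j_\fm(R)\cong \varinjlim_t H^0_\fm(R/(x_1^t,\ldots,x_j^t)),
\]
and chasing through the direct limit identification shows the Frobenius on the left matches the relative Frobenius $F^e_R$ on the right. For $0\leq k\leq d-i$ I would need to derive an analogous identification
\[
H^{k+i}_\fm(R)\cong \varinjlim_t H^k_\fm(R/(x_1^t,\ldots,x_i^t)),
\]
by iterating the long exact sequence associated to $0\to R/(x_1^t,\ldots,x_{j-1}^t)\xrightarrow{\cdot x_j^t} R/(x_1^t,\ldots,x_{j-1}^t)\to R/(x_1^t,\ldots,x_j^t)\to 0$, using the filter regular property to control exactness and, crucially, tracking how each connecting morphism interacts with the relative Frobenius.

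For part (1), the forward direction then proceeds as follows: if $R$ is weakly $F$-nilpotent, the Hartshorne-Speiser-Lyubeznik theorem produces a single exponent $e_0$ such that $F^{e_0}$ annihilates $H^j_\fm(R)$ for all $j<d$. Given $\gamma\in H^k_\fm(R/I)$ for $k<d-i$, its image in $H^{k+i}_\fm(R)$ is killed by $F^{e_0}$; tracing back through the Frobenius-compatible colimit identification, together with the uniform bound, produces an exponent $e$ with $F^e_R(\gamma)=0$ in $H^k_\fm(R/I^{[p^e]})$. The reverse direction is cleaner: any element of $H^{k+i}_\fm(R)$ is represented at some finite stage by an element $\gamma\in H^k_\fm(R/I_t)$, which is killed by a relative Frobenius by hypothesis and whose vanishing in the colimit is immediate. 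The smaller-degree cases $j\leq i$ are handled analogously using the first identification.

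For part (2), the same machinery specializes at the top-most degree: under $H^d_\fm(R)\cong\varinjlim_t H^{d-i}_\fm(R/I_t)$ the Frobenius closure and tight closure of $0$ on the left correspond to the direct limits of the relative Frobenius closure $0^{F_R}$ and relative tight closure $0^{*_R}$ on the right. Excellence of $R$, together with the equidimensional hypothesis, provides a common test element $c\in R^\circ$ that simultaneously detects all relative tight closures $0^{*_R}_{H^{d-i}_\fm(R/I_N)}$, allowing the equality $0^*_{H^d_\fm(R)}=0^F_{H^d_\fm(R)}$ to be translated into and out of the statement $0^{*_R}_{H^{d-i}_\fm(R/I_N)}=0^{F_R}_{H^{d-i}_\fm(R/I_N)}$ for every $N$. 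The hardest step is the construction and Frobenius-compatibility of the colimit identification $H^{k+i}_\fm(R)\cong\varinjlim_t H^k_\fm(R/I_t)$ for $k>0$: the naive long exact sequence approach generates connecting homomorphisms whose compatibility with $F^e_R$ must be verified by hand, and bridging the gap between Frobenius-nilpotency in the colimit and relative Frobenius-nilpotency of each term essentially requires the uniform Hartshorne-Speiser-Lyubeznik bound to absorb the boundary terms introduced by the multiplication-by-$x_1\cdots x_i$ transition maps.
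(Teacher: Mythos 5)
The paper does not prove this statement — it is imported verbatim as \cite[Theorem~5.9]{PolstraQuy} and used as a black box — so there is no internal proof to compare against. Judged on its own merits, your sketch has the right skeleton (Nagel–Schenzel plus a colimit identification transporting Frobenius to relative Frobenius) but contains gaps that are more than the technicalities you flag.

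The central one is the ``trace back'' step in the forward implication of~(1). Granting the identification $H^{k+i}_\fm(R)\cong\varinjlim_t H^k_\fm(R/I_t)$, weak $F$-nilpotence of $R$ only tells you that $F^e_R(\gamma)$ eventually dies \emph{in the colimit}, i.e.\ that $x^m F^e_R(\gamma)=0$ in $H^k_\fm(R/I_{Np^e+m})$ for some $m\geq 0$ depending on $e$. What you need is $F^e_R(\gamma)=0$ in $H^k_\fm(R/I_{Np^e})$ itself, and the transition maps $\cdot x$ are not injective on these finite-length modules, so this does not follow formally. What rescues the argument is that the obstruction lands inside finite-length colon modules that are themselves $\fm$-torsion: for instance when $i=1$, $k=0$ one finds $\eta^{p^e}\in(x_1^{Np^e})+(0:_R x_1^\infty)$ and $(0:_R x_1^\infty)=H^0_\fm(R)$, which is Frobenius-nilpotent precisely because $R$ is weakly $F$-nilpotent, so a further iterate of Frobenius kills the error. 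The decisive input is thus the nilpotence of the \emph{lower} local cohomologies of $R$; the uniform HSL bound merely supplies uniformity. Your phrase ``the HSL bound absorbs the boundary terms'' gestures at this but does not name the actual mechanism, and for $k>0$ the inductive error-term analysis is not written down.

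Second, the backward direction for cohomological degrees $j<i$ is not ``handled analogously.'' The hypothesis concerns $H^k_\fm(R/(x_1^N,\ldots,x_i^N))$ for $0\le k<d-i$, while $H^j_\fm(R)$ with $j<i$ is identified via Nagel–Schenzel with $\varinjlim_t H^0_\fm(R/(x_1^t,\ldots,x_j^t))$ — a colimit that involves only the first $j$ of the $x_\ell$'s. Relating the two requires a separate colon argument against the extra generators $x_{j+1},\ldots,x_i$, and the naive $\bigcap_s$ intersection fails because the Frobenius exponent you extract may grow with $s$. A final small but relevant point: the sequences $0\to R/(x_1^t,\ldots,x_{j-1}^t)\xrightarrow{\cdot x_j^t} R/(x_1^t,\ldots,x_{j-1}^t)\to R/(x_1^t,\ldots,x_j^t)\to 0$ are not left-exact for a merely filter-regular $x_j$; the kernel is exactly the finite-length module that generates the error terms above, so it cannot simply be waved through as ``controlling exactness.''
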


\section{Dualizing complexes and Frobenius actions}\label{Section Dualizing complexes}
Let $R$ be a ring of prime characteristic $p>0$. A \emph{Cartier linear map} $\varphi:M\to M$ on an $R$-module $M$ is an $R$-linear map $F^e_*M\to M$. Suppose further that $(R,\fm,k)$ is a complete local ring of prime characteristic $p>0$. We discuss how Frobenius actions on the local cohomology modules of $R$ can be understood through Cartier linear maps of $\Ext$-modules induced by Matlis duality (see for example \cite[Section~5]{BlickleBockle} and \cite[Section~2]{GlobalParameterTestIdeals}).  

Any (not necessarily local) $F$-finite ring $R$ is the homomorphic image of a regular ring \cite{Gabber}, hence $R$ admits a dualizing complex\footnote{ We refer the reader to \cite{HartshorneResiduesAndDuality} and \cite[\href{https://stacks.math.columbia.edu/tag/0A7A}{Tag 0A7A}]{StacksProject} for basics of dualizing complexes.} $\omega_R^\bullet$. Iterates of the Frobenius endomorphism induce Cartier linear maps on the chain complex $\omega_R^\bullet$ and such maps can be localized to understand the behavior of Frobenius actions on local cohomology modules. We denote by $(F^e)^\vee:F^e_*\omega_R^\bullet\to \omega_R^\bullet$ the induced Cartier linear map obtained by evaluating at $1$. The Cartier linear map $(F^e)^\vee$ is a degree preserving map on the chain complex $\omega^\bullet_R$. In particular, for each $i\in \mathbb{Z}$ we let $(F^e)^\vee(i)$ denote degree $i$ piece of $(F^e)^\vee$. As our notation is suggestively indicating, in a local ring the Matlis dual of $(F^e)^\vee$ will be the Frobenius action on local cohomology.

If $(R,\fm,k)$ is a local ring of Krull dimension $d$ with dualizing complex $\omega_R^\bullet$ then we say $\omega_R^\bullet$ is normalized if $H^{-d}(\omega_R^\bullet)=\omega_R$ is a canonical module for $R$. If $R$ is not assumed to be local, but is assumed to be locally equidimensional of Krull dimension $d$, then we say a dualizing complex $\omega_R^\bullet$ is normalized if for every $\fp\in\Spec(R)$ the shifted localized complex $(\omega_R^{\bullet}\otimes R_\fp)[-d+\Height(\fp)]$ is a normalized complex for $R_\fp$.

\begin{proposition}
\label{Proposition Frobenius is just Grothendieck Trace} Let $R$ be a locally equidimensional $F$-finite ring of Krull dimension $d$, $\omega_R^\bullet$ a normalized dualizing complex of $R$, and for each $e\in \mathbb{N}$ let $(F^e)^\vee:F^e_*\omega_R^\bullet \to \omega_R^\bullet$ be the map of dualizing complexes induced by $F^e:R\to R$. Then for each $\fp\in \Spec(R)$ and $i\in \mathbb{N}$ the $R_\fp$-Matlis dual of $(F^e)^\vee_\fp(-i)$ of the normalized dualizing complex $\omega_{R_\fp}^\bullet=(\omega_{R}^\bullet\otimes R_\fp)[-d+\Height(\fp)]$ induces the $e$th Frobenius action on the local cohomology module $F^e:H^{i}_{\fp R_\fp}(R_\fp)\to F^e_*H^{i}_{\fp R_\fp}(R_\fp)$.
\end{proposition}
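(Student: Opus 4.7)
The plan is to reduce to the local case via the normalization hypothesis and then use local duality to convert the Matlis dual of $(F^e)^\vee$ into the Frobenius action on local cohomology.

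First, since Frobenius, its induced action on local cohomology, and the construction of $(F^e)^\vee$ all commute with localization, it suffices to treat the local case. Replacing $R$ with $R_\fp$ and invoking the normalization hypothesis (which is exactly what guarantees that the shifted localization $(\omega_R^\bullet\otimes R_\fp)[-d+\Height(\fp)]$ remains a normalized dualizing complex for $R_\fp$), we may assume $(R,\fm,k)$ is a local $F$-finite ring of dimension $d$ with normalized dualizing complex $\omega_R^\bullet$. Passing to the completion does not alter either side (local cohomology and its Frobenius action are unchanged, while the dualizing complex base changes to the completion's normalized one), so we may further suppose $R$ is complete. Local duality then provides a quasi-isomorphism $\mathbf{R}\Gamma_\fm(R)\cong \RHom_R(\omega_R^\bullet,E_R)$, where $E_R$ is the injective hull of $k$, and taking cohomology in degree $i$ yields the Matlis-duality identification $H^i_\fm(R)\cong H^{-i}(\omega_R^\bullet)^\vee$.

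Next, I would identify the Matlis dual of the chain-level map $(F^e)^\vee:F^e_*\omega_R^\bullet\to \omega_R^\bullet$ with the Frobenius on $\mathbf{R}\Gamma_\fm(R)$. Because $R$ is $F$-finite, the Frobenius $F^e:R\to R$ is a finite ring map, so Grothendieck duality supplies an isomorphism $(F^e)^!\omega_R^\bullet=\RHom_R(F^e_*R,\omega_R^\bullet)\cong F^e_*\omega_R^\bullet$, under which $(F^e)^\vee$ corresponds to the counit (``evaluation at $1$'') of the adjunction $F^e_*\dashv (F^e)^!$. This counit is tautologically the Grothendieck dual of the ring map $F^e$, and local duality carries it to the morphism $\mathbf{R}\Gamma_\fm(R)\to F^e_*\mathbf{R}\Gamma_\fm(R)$ induced by $F^e:R\to R$, i.e.\ to the $e$th Frobenius action on $\mathbf{R}\Gamma_\fm(R)$. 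Extracting the degree $i$ component then gives the desired identification of the Matlis dual of $(F^e)^\vee(-i)$ with the Frobenius $F^e:H^i_\fm(R)\to F^e_*H^i_\fm(R)$.

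The main obstacle is the compatibility in the final step, namely verifying that the local duality equivalence intertwines the chain-level ``evaluation at $1$'' map $(F^e)^\vee$ with the ring-theoretic Frobenius on $\mathbf{R}\Gamma_\fm(R)$. This is essentially standard Grothendieck duality for finite morphisms (see \cite[Lemma~2.3]{SrinivasTakagi} and \cite[Section~2]{GlobalParameterTestIdeals}), but making the identification fully explicit requires carefully tracing the Matlis dual through the $F^e_*\dashv (F^e)^!$ adjunction; once this is confirmed, the reduction to the local case in the opening paragraph is routine.
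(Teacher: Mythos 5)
Your proof is correct and follows essentially the same route as the paper's: reduce to the local (and complete) case by noting that dualizing complexes, the evaluation-at-$1$ map $(F^e)^\vee$, and Frobenius are functorial and commute with localization and completion, and then invoke local/Grothendieck duality to identify the Matlis dual of the Cartier map with the Frobenius action on local cohomology. The only difference is one of exposition: the paper dispatches the second step entirely by citing \cite[Lemma~5.1]{BlickleBockle}, whereas you sketch that lemma's content via the $F^e_* \dashv (F^e)^!$ adjunction (while acknowledging you have not traced every identification).
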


\begin{proof}
Dualizing complexes, the evaluation at $1$ map $(F^e)^\vee:F^e_*\omega_R^\bullet\to \omega_R^\bullet$, and the Frobenius map are functorial and commute with localization and completion. Therefore the proposition is a consequence of \cite[Lemma~5.1]{BlickleBockle}.
\end{proof}

As a consequence of Proposition~\ref{Proposition Frobenius is just Grothendieck Trace} we demonstrate the existence of uniform bounds of the Hartshorne-Speiser-Lyubeznik numbers in a (not necessarily local) ring $R$ which is either $F$-finite or essentially of finite type over an excellent local ring.

\begin{definition}\cite{HartshorneSpeiser, LyubeznikFmodules}
Let $(R,\fm,k)$ be a $d$-dimensional local ring of prime characteristic $p>0$. For each $0\leq i\leq d$, consider the non-decreasing sequence of $R$-submodules
$$N_{i,e}:=\{z\in H^i_\fm(R)\mid F^e(z)=0\}\subseteq H^i_\fm(R).$$
\begin{enumerate}
    \item The $i$th Hartshorne-Speiser-Lyubeznik of $R$ is defined to be $$\HSL_i(R):=\min\{e\mid N_{i,e+j}=N_{i,e}\text{ for all }j\geq 1\};$$
    \item The Hartshorne-Speiser-Lyubeznik of $R$ is defined as $$\HSL(R):=\max\{\HSL_i(R)\mid 0\leq i\leq d\}.$$
\end{enumerate}
\end{definition}

\begin{corollary}
\label{Corollary finite HSL number}
Let $R$ be a locally equidimensional ring of prime characteristic $p>0$ and Krull dimension $d$ which is either $F$-finite or essentially of finite type over an excellent local ring. Then the set of numbers $\{\HSL(R_\fp)\mid \fp\in \Spec(R)\}$ is bounded. 
\end{corollary}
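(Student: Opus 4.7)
The plan is to exploit Proposition~\ref{Proposition Frobenius is just Grothendieck Trace} to convert the question about HSL numbers at every localization into a single finiteness statement about a descending chain of submodules in finitely generated $R$-modules; Noetherianness then yields the uniform bound immediately.

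In the $F$-finite case, fix a normalized dualizing complex $\omega_R^\bullet$ of $R$, which exists since $F$-finite rings are quotients of regular rings. For each $j$ with $0 \leq j \leq d$ the cohomology $M_j := H^{-j}(\omega_R^\bullet)$ is a finitely generated $R$-module, and the Cartier map $(F^e)^\vee: F^e_*\omega_R^\bullet \to \omega_R^\bullet$ induces $R$-linear maps $\varphi_e^j: F^e_*M_j \to M_j$ in each cohomological degree. The factorization $(F^{e+1})^\vee = (F^e)^\vee \circ F^e_*((F)^\vee)$ gives $\im(\varphi_{e+1}^j) \subseteq \im(\varphi_e^j)$, so the images form a descending chain of $R$-submodules of $M_j$. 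Since $M_j$ is Noetherian, the chain stabilizes at some $e_j$, and setting $e_0 := \max_{0 \leq j \leq d} e_j$ yields a single global stabilization bound.

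For any $\fp \in \Spec(R)$, localization is exact, so $(\im\varphi_e^j)_\fp = (\im\varphi_{e_0}^j)_\fp$ for all $e \geq e_0$ and all $j$. Since HSL numbers are preserved under completion (as completion commutes with both local cohomology and Frobenius), I would then Matlis dualize on $\widehat{R_\fp}$. Under the identification of Proposition~\ref{Proposition Frobenius is just Grothendieck Trace}, and using that $\ker(\alpha)^\vee = \coker(\alpha^\vee)$, stabilization of the descending chain of images of Cartier maps corresponds to stabilization of the ascending chain of nilpotent kernels $N_{i,e} \subseteq H^i_{\fp R_\fp}(R_\fp)$ for every $i$. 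Hence $\HSL_i(R_\fp) \leq e_0$ for every $\fp$ and every $i$, giving $\HSL(R_\fp) \leq e_0$ uniformly.

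In the essentially of finite type case, Proposition~\ref{Proposition Frobenius is just Grothendieck Trace} does not apply directly since $R$ need not be $F$-finite. My plan is to reduce to the $F$-finite case by a faithfully flat base change: if $R$ is essentially of finite type over the excellent local ring $(A, \fm_A, k)$, the Hochster--Huneke gamma construction produces a faithfully flat $A \to A^\Gamma$ with $A^\Gamma$ $F$-finite, so that $R^\Gamma := R \otimes_A A^\Gamma$ is $F$-finite. The $F$-finite case then bounds $\HSL((R^\Gamma)_\fq)$ uniformly in $\fq \in \Spec(R^\Gamma)$, and the remaining obstacle is to compare $\HSL(R_\fp)$ with $\HSL((R^\Gamma)_\fq)$ for primes $\fq$ lying over $\fp$. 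This descent step is the principal difficulty and relies on controlling Frobenius actions on local cohomology under faithfully flat base change with geometrically regular fibers, which is precisely the circle of questions investigated in Section~\ref{Section flat base change results}.
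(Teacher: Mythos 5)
Your overall strategy mirrors the paper's: express Frobenius actions via Cartier maps on a normalized dualizing complex, show the chain of images of $(F^e)^\vee$ stabilizes globally, and reduce the non-$F$-finite case to the $F$-finite case via completion and a $\Gamma$-construction. However, there is a genuine gap at the crux of the $F$-finite case. You observe correctly that the images $\im(\varphi_e^j)$ form a \emph{descending} chain of submodules of the finitely generated module $M_j$, and then assert that this chain stabilizes ``since $M_j$ is Noetherian.'' This is exactly backwards: Noetherianness guarantees that \emph{ascending} chains stabilize, not descending ones (e.g.\ $2\mathbb{Z}\supsetneq 4\mathbb{Z}\supsetneq 8\mathbb{Z}\supsetneq\cdots$ in the Noetherian ring $\mathbb{Z}$). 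The stabilization of the image chain of a coherent Cartier module is a real theorem --- it goes back to Hartshorne--Speiser, Lyubeznik, and Gabber, and in the form needed here is \cite[Proposition~2.14]{BlickleBockle} --- and the paper invokes precisely that result. Its proof requires a Noetherian induction on the support together with generic behavior of Cartier structures, not merely the chain condition on $M_j$. Without this ingredient your global bound $e_0$ does not exist for the reason you gave.

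For the essentially-of-finite-type case you have the right reduction in mind (complete the base, then apply the $\Gamma$-construction), but you leave the comparison of $\HSL(R_\fp)$ with $\HSL((R^\Gamma)_\fq)$ as an unresolved ``principal difficulty.'' The paper closes this cleanly: HSL numbers are preserved under faithfully flat local ring maps with $0$-dimensional closed fiber (citing \cite{KatzmanZhang}), and both steps of the reduction --- base change to $R\otimes_A\widehat{A}$ (choosing for each $\fp$ a prime minimal over its extension) and the purely inseparable map $R\to R^\Gamma$ (which induces a homeomorphism on spectra with $0$-dimensional fibers) --- fall under this umbrella. So this step is not an open-ended difficulty requiring the machinery of Section~\ref{Section flat base change results}; it is a direct consequence of the $0$-dimensional-fiber preservation of HSL numbers.
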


\begin{proof}
Suppose first that $R$ is $F$-finite. Let $\omega_R^\bullet$ be a normalized dualizing complex of $R$. Utilizing the language of \cite{BlickleBockle}, the graded pieces of the complex $\omega_R^\bullet$ along with the corresponding graded pieces of $(F^e)^\vee$ are coherent Cartier modules on $X=\Spec(R)$. Therefore we may apply \cite[Proposition~2.14]{BlickleBockle} in degrees $-d\leq i \leq 0$ to know there exists an integer $e_0$ so that the image of $(F^e)^\vee:F^e_*\omega_R^\bullet \to \omega_R^\bullet$ agrees with the image of $(F^{e_0})^\vee$ in degrees $-d\leq i\leq 0$ for all $e\geq e_0$. By Proposition~\ref{Proposition Frobenius is just Grothendieck Trace} measuring the $\HSL$ numbers of $R_\fp$ is equivalent to understanding when the cokernel of $(F^e)^\vee$ stabilizes, from which the corollary is easily derived.

Suppose $R$ is essentially of finite type over an excellent local ring $A$. The $\HSL$-numbers of a local ring are easily seen to be preserved under faithfully flat local ring maps with $0$-dimensional fiber, see \cite[Discussion following Remark~2.1]{KatzmanZhang}. We may base change by the completion of $A$ without affecting the hypotheses and assume $R$ is essentially of finite type over a complete local ring. We can then utilize a $\Gamma$-construction to find a faithfully flat and purely inseparable map $R\to R^\Gamma$ so that $R^\Gamma$ is $F$-finite\footnote{For more on the existence of such ring maps, see either \cite{HochsterHunekeTransactions} or \cite{TakumiGamma}.}.
\end{proof}

\begin{definition}
Let $I\subseteq R$ be an ideal and $c\in R^\circ$. We say that $e_0$ is \emph{test exponent for tight closure with respect to $c$ for $I$} if $I^*=\{x\in R\mid cx^{p^{e_0}}\in I^{[p^{e_0}]}\}$.
\end{definition}
Sharp proved in \cite{SharpTightClosureTestExponents} that test exponents of parameters ideals in a local ring with respect to a test element depend on the $\HSL$-numbers of that ring. Sharp's result and Corollary~\ref{Corollary finite HSL number} provide a uniform test exponent for tight closure of parameter ideals among all localizations of an $F$-finite ring $R$.

\begin{corollary}
\label{Finite test exponents for tight closure}
Let $R$ be an $F$-finite locally equidimensional ring and suppose that $R$ has a completely stable test element $c$ (e.g., if $R$ is reduced). There exists an $e_0\in \mathbb{N}$ with the following properties:
\begin{enumerate}
\item If $\fp\in \Spec(R)$ and $\fq=(x_1,\ldots,x_{\Height(\fp)})R_\fp$ is a parameter ideal of $R_\fp$ then $e_0$ is a test exponent for tight closure with respect to $c$ for $\fq^*$;
\item If $\fp \in \Spec(R)$ then $e_0+1$ is a test exponent for tight closure with respect to $c$ for $0^*_{H^d_{\fp R_\fp}(R_\fp)}$.
\end{enumerate}
\end{corollary}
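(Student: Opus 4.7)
The plan is to combine the uniform bound on Hartshorne-Speiser-Lyubeznik numbers obtained in Corollary~\ref{Corollary finite HSL number} with Sharp's theorem \cite{SharpTightClosureTestExponents}, which states that in a local ring with a completely stable test element $c$ one can produce a test exponent for the tight closure of parameter ideals that depends only on the HSL number of the ring (and on $c$).

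First I would apply Corollary~\ref{Corollary finite HSL number} to fix $N \in \mathbb{N}$ with $\HSL(R_\fp) \leq N$ for every $\fp \in \Spec(R)$. Because the property of being a completely stable test element is preserved under localization, $c$ remains such a test element for each $R_\fp$. Sharp's bound then produces an integer $e_0$, depending only on $N$ and $c$, which serves as a test exponent for every parameter ideal $\fq = (x_1, \ldots, x_{\Height(\fp)})R_\fp$ as $\fp$ ranges over $\Spec(R)$, yielding statement~(1).

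For (2), fix $\fp \in \Spec(R)$ and set $d = \Height(\fp)$. Identify $H^d_{\fp R_\fp}(R_\fp)$ via the direct limit description in Theorem~\ref{Nagel-Schenzel} with respect to a system of parameters $x_1, \ldots, x_d$, so that an arbitrary class $\gamma$ is represented by $\eta + (x_1^t, \ldots, x_d^t)$. A standard chase through the direct limit shows that $\gamma \in 0^*_{H^d_{\fp R_\fp}(R_\fp)}$ is equivalent to the existence of some $s \geq 0$ such that $\eta(x_1\cdots x_d)^s \in (x_1^{t+s}, \ldots, x_d^{t+s})^*$ in $R_\fp$. Combining this equivalence with (1) converts the tight closure membership into the explicit containment
\[
c \eta^{p^{e_0}} (x_1\cdots x_d)^{sp^{e_0}} \in (x_1^{(t+s)p^{e_0}}, \ldots, x_d^{(t+s)p^{e_0}}),
\]
which is precisely the statement that $c F^{e_0}(\gamma) = 0$ in $H^d_{\fp R_\fp}(R_\fp)$ after passing to a suitable representative. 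The extra iterate in ``$e_0 + 1$'' is present to absorb the cost of moving between representatives of $\gamma$ in the direct limit when applying the parameter-ideal test exponent, since Sharp's criterion must be applied to the $p^{e_0}$-th powers of the chosen parameters.

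The main obstacle is the careful bookkeeping in the passage between the parameter-ideal formulation of test exponents in (1) and the top local cohomology formulation in (2): one must verify that a single integer $e_0$ (or $e_0 + 1$) works \emph{uniformly} across every system of parameters in every localization $R_\fp$. The uniform bound in Corollary~\ref{Corollary finite HSL number} is exactly what bypasses this obstacle, because Sharp's construction of the test exponent depends on the ring $R_\fp$ only through $\HSL(R_\fp)$ and the fixed test element $c$.
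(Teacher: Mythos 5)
Your argument for part (1) coincides with the paper's: both combine Corollary~\ref{Corollary finite HSL number} with Sharp's result (\cite[Corollary~2.4(i)]{SharpTightClosureTestExponents}). The issue is with part (2), where your sketch quietly passes over the two ingredients that actually carry the proof.

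First, the ``standard chase'' you invoke to identify $\gamma \in 0^*_{H^d_{\fp R_\fp}(R_\fp)}$ with a tight closure membership for a representative is not a formal limit manipulation: going from $c\,\eta^{p^e} x^{s'} \in (x_1^{tp^e+s'},\ldots,x_d^{tp^e+s'})$ back to $\eta \in (x_1^t,\ldots,x_d^t)^*$ requires the colon-capturing property of tight closure. The paper uses exactly this: from $cz^{p^{e_0+1}} x^N \in (x_1^{np^{e_0+1}+N},\ldots,x_d^{np^{e_0+1}+N})$ it deduces
\[
cz^{p^{e_0+1}} \in (x_1^{np^{e_0+1}+N},\ldots,x_d^{np^{e_0+1}+N}):x^N \subseteq (x_1^{np^{e_0+1}},\ldots,x_d^{np^{e_0+1}})^*.
\]
Your proof never mentions colon-capturing, and without it the ``equivalence'' at the heart of your argument is unsupported in the hard direction (namely, $cF^{e_0+1}(\gamma)=0 \Rightarrow \gamma \in 0^*$, which is the whole point of a test exponent; the other direction is tautological because $c$ is a test element).

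Second, the step where you ``combine the equivalence with (1)'' silently replaces what (1) gives you (a test exponent for the ideal $\fq^*$) with a test exponent for the parameter ideal $\fq$ itself, which is stronger. Bridging exactly this gap is what \cite[Corollary~2.4(ii)]{SharpTightClosureTestExponents} accomplishes, and it is precisely this application that costs one extra Frobenius iterate. So the ``$+1$'' in $e_0+1$ does not come from ``moving between representatives of $\gamma$ in the direct limit'' as you suggest — that adjustment can be absorbed by multiplying by a suitable power of $x = x_1\cdots x_d$ without raising $e_0$. The paper's proof first colon-captures to land in $(x_1^{np^{e_0+1}},\ldots,x_d^{np^{e_0+1}})^*$, then invokes the proof of Sharp's Corollary~2.4(ii) to conclude $z \in (x_1^n,\ldots,x_d^n)^*$; both of these steps are missing from your proposal, and the justification you give for the $+1$ is incorrect.
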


\begin{proof}
Property (1) is immediate by Corollary~\ref{Corollary finite HSL number} and \cite[Corollary~2.4(i)]{SharpTightClosureTestExponents}. We will show that the proof of (2) reduces to showing that if $(R,\fm,k)$ is local of dimension $d$, $e_0$ a test exponent for parameter ideals with respect to $c$, then $e_0+1$ is a test exponent for $0^*_{H^d_{\fm}(R)}$.

Let $x_1,\cdots,x_d$ be a system of parameters, $\mathfrak{q} = (x_1,\cdots, x_d)$, and $x=x_1\cdot x_2\cdots x_d$. Suppose $\xi = [z+(x_1^n,\cdots,x_d^n)]\in H^d_{\fm}(R)$ and
\[
c\xi^{p^{e_0+1}}= [cz^{p^{e_0+1}}+(x_1^{np^{e_0+1}},\cdots,x_d^{np^{e_0+1}})]=0.
 \] 
We aim to show $\xi\in 0^*_{H^d_{\fm}(R)}$. There exists an integer $N$ such that 
\[
cz^{p^{e_0+1}}x^N \in (x_1^{np^{e_0+1}+N},\cdots,x_d^{np^{e_0+1}+N}).
\]
Thus, by the colon-capturing property of tight closure, \[cz^{p^{e_0+1}} \in (x_1^{np^{e_0+1}+N},\cdots,x_d^{np^{e_0+1}+N}):x^N \subset (x_1^{np^{e_0+1}},\cdots,x_d^{np^{e_0+1}})^*.\] 
Thus by the proof of \cite[Corollary~2.4(ii)]{SharpTightClosureTestExponents}, $z \in (x_1^n,\cdots,x_d^n)^*$, and so $[z+(x_1^n,\cdots,x_d^n)] \in 0^*_{H^d_\mathfrak{m}(R)}$.
\end{proof}

Similar to the notion of test exponents for tight closure, there is a notion of a test exponent for Frobenius closure. Let $I$ be an ideal in a Noetherian ring $R$ of prime characteristic $p>0$. Recall that the Frobenius closure of $I$ is the ideal $I^F=\{r\in R\mid r^{p^e}\in I^{[p^e]}\mbox{ for some }e\in\mathbb{N}\}$. It is not difficult to see that there exists an $e_0$, depending on $I$, so that $I^{[p^{e_0}]}=(I^F)^{[p^{e_0}]}$.

\begin{definition}
Let $I\subseteq R$ be an ideal. The minimal $e_0\in \mathbb{N}$ so that $I^{[p^{e_0}]}=(I^F)^{[p^{e_0}]}$ is called the \emph{Frobenius test exponent of $I$} and is denoted $\Fte(I)$. If $R$ is local then we say that $e_0\in \mathbb{N}$ is a \emph{Frobenius test exponent of $R$} if $e_0$ bounds the Frobenius test exponent of every ideal of $R$ generated by a full system of parameters. If such a bound exists we denote by $\Fte(R)$ the minimum Frobenius test exponent of $R$.
\end{definition}  

It has been shown that the classes of Cohen-Macaulay, generalized Cohen-Macaulay, weakly $F$-nilpotent, and generalized weakly $F$-nilpotent rings all have finite Frobenius test exponent (see \cite{KatzmanSharp}, \cite{HunekeKatzmanSharpYao}, \cite{Quy}, and \cite{Maddox} respectively\footnote{One cannot expect a uniform Frobenius test exponent for the class of all ideals of $R$, see \cite{Brenner}.}). In \cite{Quy}, Pham Hung Quy obtains bounds for the Frobenius test exponent of a local weakly $F$-nilpotent ring in terms of its $\HSL$-numbers. Therefore we are able to uniformly bound the Frobenius test exponents of a ring $R$ at its localizations at prime ideals defining weakly $F$-nilpotent local rings.

\begin{corollary}
\label{corollary bounded FTE}
Let $R$ be a Noetherian ring of prime characteristic $p>0$ which is either $F$-finite or essentially of finite type over an excellent local ring. Then the set $$\{\Fte(R_\fp)\mid \fp\in\Spec (R)\text{ such that } R_\fp\text{ is weakly } F\text{-nilpotent}\}$$ is bounded.
\end{corollary}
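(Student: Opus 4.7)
The plan is to combine the uniform bound on Hartshorne-Speiser-Lyubeznik numbers from Corollary~\ref{Corollary finite HSL number} with the main theorem of \cite{Quy}, which bounds the Frobenius test exponent of a weakly $F$-nilpotent local ring explicitly in terms of its $\HSL$-numbers and its Krull dimension. Once both facts are assembled the deduction is nearly one line: for every $\fp\in \Spec(R)$ where $R_\fp$ is weakly $F$-nilpotent, Quy's result gives $\Fte(R_\fp)\leq g(\HSL(R_\fp),\dim(R_\fp))$ for an explicit function $g$, and simultaneous bounds on both arguments of $g$ yield a uniform bound on $\Fte(R_\fp)$.

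To make this precise, I would first record that $R$ has finite Krull dimension under either hypothesis: a ring essentially of finite type over an excellent local ring is obviously finite-dimensional, while an $F$-finite Noetherian ring is excellent and a homomorphic image of a finite-dimensional regular ring by Gabber's theorem, hence finite-dimensional itself. Consequently $\dim(R_\fp)\leq\dim(R)<\infty$ uniformly in $\fp$. After arranging local equidimensionality if required (e.g.\ by passing to each quotient $R/\fq$ for $\fq$ minimal and observing that the $\HSL$-numbers and Frobenius test exponents appearing in the corollary are local invariants of the residue ring $R_\fp$), Corollary~\ref{Corollary finite HSL number} produces an integer $N_1$ with $\HSL(R_\fp)\leq N_1$ for every $\fp\in\Spec(R)$. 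Setting
\[
N=\max\{\,g(a,b)\,:\,0\leq a\leq N_1,\ 0\leq b\leq \dim(R)\,\}
\]
gives the uniform bound $\Fte(R_\fp)\leq N$ on the weakly $F$-nilpotent locus, completing the argument.

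The main technical obstacle is matching the locally equidimensional hypothesis of Corollary~\ref{Corollary finite HSL number} with the slightly weaker hypothesis of the present statement. The cleanest resolution is to observe that the Cartier-module stabilization argument underlying Corollary~\ref{Corollary finite HSL number} ultimately requires only the existence of a dualizing complex for $R$ and finite Krull dimension, both of which are available here; as a fallback, one stratifies $\Spec(R)$ via the finitely many minimal primes of $R$ and applies the corollary to each equidimensional quotient separately, which suffices because a given $R_\fp$ already \emph{is} the localization of $R/\fq$ at $\fp$ for any minimal $\fq\subseteq\fp$.
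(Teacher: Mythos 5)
Your main argument is essentially the paper's: combine the uniform $\HSL$ bound of Corollary~\ref{Corollary finite HSL number} with Quy's Main Theorem from \cite{Quy}, which bounds $\Fte(R_\fp)$ by $\sum_{i=0}^{\Ht\fp}\binom{\Ht\fp}{i}\HSL_i(R_\fp)$, and then use finiteness of $\dim R$ (citing \cite[Proposition~1.1]{Kunz1976} in the $F$-finite case) to conclude. That part is right and matches the paper's proof line for line.

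You do correctly flag a hypothesis mismatch that the paper quietly glosses over: Corollary~\ref{Corollary finite HSL number} as stated assumes $R$ is locally equidimensional, while the present corollary does not. However, your second proposed repair is incorrect. You assert that ``a given $R_\fp$ already is the localization of $R/\fq$ at $\fp$ for any minimal $\fq\subseteq\fp$,'' but $(R/\fq)_\fp = R_\fp/\fq R_\fp$, which equals $R_\fp$ only when $\fq R_\fp=0$, i.e.\ when $\fq$ is the \emph{unique} minimal prime contained in $\fp$. If $\fp$ lies over several minimal primes this fails, so you cannot transfer the $\HSL$ bound from the equidimensional quotients back to $R_\fp$ this way. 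The clean fix, which the paper uses in an analogous spot in the proof of Theorem~\ref{Theorem Open}, is that a weakly $F$-nilpotent local ring is automatically equidimensional by \cite[Proposition~2.8~(3) and Remark~2.9]{PolstraQuy}; since only primes $\fp$ with $R_\fp$ weakly $F$-nilpotent enter the statement, one may localize $R$ near such a $\fp$ (preserving the $F$-finite or essentially-of-finite-type hypothesis) to arrange local equidimensionality before invoking Corollary~\ref{Corollary finite HSL number}, and a single global bound then follows because $\dim R < \infty$.
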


\begin{proof}
By Corollary~\ref{Corollary finite HSL number} let $\HSL(R_\fq)<C$ for every $\fq\in \Spec R$. Let $\fp\in\Spec R$ and suppose that $R_\fp$ is weakly $F$-nilpotent. By the proof of \cite[Main Theorem]{Quy}, one obtains
\begin{equation}
\Fte(R_\fp)\leq \sum\limits_{i=0}^{\Ht\fp}{\Ht\fp\choose i}\HSL_i(R_\fp)\leq\sum\limits_{i=0}^{\Ht\fp}{\Ht\fp\choose i} \HSL(R_\fp) <C\cdot\sum\limits_{i=0}^{\Ht\fp}{\Ht\fp\choose i}.\label{Quy-Fte-bound}
\end{equation}
Every $F$-finite ring has finite Krull dimension by \cite[Proposition~1.1]{Kunz1976}, so suppose $R$ has Krull dimension $d$. Then the right hand side of \ref{Quy-Fte-bound} has at most $d+1$ terms which are all bounded by a constant independent of $\fp$, hence so is $\Fte(R_\fp)$.
\end{proof}

Pham Hung Quy has alerted us that Corollary~\ref{corollary bounded FTE} may also be recovered from the literature for a \emph{local} weakly $F$-nilpotent ring. See \cite[Main Theorem]{Quy} and \cite[Proposition~3.5]{HuongQuy}.

\section{Permanence properties}\label{Section flat base change results}

Theorem~\ref{Main Theorem Ascent} is a consequence of the material in this section. We begin with a lemma.

\begin{lemma}
\label{lemma faithfully flat CM fiber} Let $(R,\fm)\to (S,\fn)$ be a faithfully flat local ring map with Cohen-Macaulay closed fiber $S/\fm S$ of Krull dimension $\ell$. Then $\depth(S)= \depth(R) + \ell$ and there exists $T_1,\ldots, T_\ell$ a regular sequence of elements of $S$ which is also a regular sequence of $S/\fm S$. Moreover, for any such sequence of elements
\begin{enumerate}
\item $R\to S/(T_1,\ldots, T_\ell)$ is faithfully flat with $0$-dimensional closed fiber,
\item and if $x_1,\ldots,x_i$ is a parameter sequence of $R$ then $(x_1,\ldots,x_i,T_1,\ldots,T_\ell)S\cap R=(x_1,\ldots,x_i)R$. 
\end{enumerate}
\end{lemma}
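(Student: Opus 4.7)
The proof rests on two standard tools for faithfully flat local maps: the depth formula $\depth(S)=\depth(R)+\depth(S/\fm S)$ (e.g.\ Matsumura, Theorem 23.3) and the lifting principle that if $b_1,\ldots,b_r\in S$ map to an $S/\fm S$-regular sequence then $b_1,\ldots,b_r$ is $S$-regular and $S/(b_1,\ldots,b_r)$ is $R$-flat (Matsumura, Theorem 22.5 / Corollary). The depth formula combined with $\depth(S/\fm S)=\dim(S/\fm S)=\ell$ (Cohen-Macaulayness) immediately yields $\depth(S)=\depth(R)+\ell$. To produce the $T_i$, I would choose $T_1,\ldots,T_\ell\in\fn$ by prime avoidance so that their images in $S/\fm S$ form a system of parameters; since $S/\fm S$ is CM of dimension $\ell$ this is automatically a regular sequence of $S/\fm S$, and then the lifting principle promotes it to a regular sequence of $S$.

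For (1), let $T_1,\ldots,T_\ell$ be any sequence with the stated properties. By hypothesis their images form a regular sequence in $S/\fm S$, so the lifting principle also gives that $S/(T_1,\ldots,T_\ell)$ is flat over $R$; being a flat extension of local rings it is faithfully flat. The closed fiber of $R\to S/(T_1,\ldots,T_\ell)$ is
\[
S/(T_1,\ldots,T_\ell)\otimes_R k \;\cong\; (S/\fm S)\big/(T_1,\ldots,T_\ell)(S/\fm S),
\]
which has Krull dimension zero because any regular sequence of length equal to the dimension in a Cohen-Macaulay local ring is a system of parameters.

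For (2), set $T=S/(T_1,\ldots,T_\ell)$. Since $R\to T$ is faithfully flat by part (1), the standard contraction property $IT\cap R=I$ holds for every ideal $I\subseteq R$. Applied with $I=(x_1,\ldots,x_i)R$, this reads
\[
\bigl((x_1,\ldots,x_i)S+(T_1,\ldots,T_\ell)S\bigr)\cap R \;=\; (x_1,\ldots,x_i)R,
\]
which is exactly the desired equality; note the parameter hypothesis on the $x_j$ is not actually used here, as the statement follows purely from faithful flatness.

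The only substantive step is invoking the flatness-lifting theorem for regular sequences; everything else is bookkeeping, so I do not anticipate a genuine obstacle.
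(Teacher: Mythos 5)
Your proposal is correct and follows essentially the same route as the paper. The only cosmetic difference is that you cite Matsumura's flatness-lifting theorem (22.5 and its corollary) as a black box to simultaneously obtain $S$-regularity of the $T_i$ and flatness of $R\to S/(T_1,\ldots,T_\ell)$, whereas the paper unwinds that theorem inline: it invokes prime avoidance to select the $T_i$ regular on both $S$ and $S/\fm S$, and then redoes the $\Tor_1$ long-exact-sequence computation (plus induction on $\ell$) to establish flatness of the quotient. Your observation that the parameter hypothesis on $x_1,\ldots,x_i$ in part (2) is superfluous is also accurate --- the paper's argument likewise uses only that faithfully flat local maps are injective --- though of course the lemma is only ever applied with parameter sequences.
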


\begin{proof}
Because $R\to S$ is faithfully flat we find that $\depth(S)=\depth(R)+\depth(S/\fm S)=\depth(R)+\ell$ by \cite[Theorem~23.3]{Matsumura}. By prime avoidance and the assumption that $S/\fm S$ is Cohen-Macaulay we may choose $T_1,\ldots, T_\ell$ in $S$ to be a regular sequence of $S$ and $S/\fm S$. Consider the short exact sequence
\[
0\to S\xrightarrow{\cdot T_1}S\to \frac{S}{(T_1)}\to 0.
\]
We are assuming $R\to S$ is flat. Equivalently, $\Tor_1^R(R/\fm,S)=0$ and therefore there's an exact sequence
\[
0\to \Tor_1^R\left(\frac{R}{\fm}, \frac{S}{(T_1)}\right)\to \frac{S}{\fm S}\xrightarrow{\cdot T_1}\frac{S}{\fm S}\to \frac{S}{(\fm,T_1)S}\to 0.
\]
But we have chosen $T_1$ to be $S/\fm S$-regular and therefore $\Tor_1^R(R/\fm, S/(T_1))=0$, i.e. $R\to S/(T_1)$ is faithfully flat. By induction on the dimension of the closed fiber we find that $R\to S/(T_1,\ldots,T_\ell)$ is faithfully flat with $0$-dimensional closed fiber.

Let $x_1,\ldots,x_i$ be a parameter sequence of $R$. Then modulo $(x_1,\ldots,x_i)$ the intersection ideal $(x_1,\ldots,x_i,T_1,\ldots,T_\ell)S\cap R$ is realized as the kernel of the faithfully flat map $R/(x_1,\ldots,x_i)\to S/(x_1,\ldots,x_i,T_1,\ldots, T_\ell)S$. Faithfully flat maps are injective and therefore $(x_1,\ldots,x_i,T_1,\ldots,T_\ell)S\cap R=(x_1,\ldots,x_i)R$.
\end{proof}

\begin{notation}
\label{Notation T's}
We denote a sequence of ring elements $T_1,\dots, T_\ell\in R$ by $\underline{T}$. Moreover, for any $N\in\N$ we let $\underline{T}^N=T_1^N,\dots, T_\ell^N$.
\end{notation}

We are now prepared to present a proof that the property of being either $F$-nilpotent or weakly $F$-nilpotent descends under faithfully flat local ring maps with Cohen-Macaulay closed fiber. The key ingredients are Theorem~\ref{Nagel-Schenzel}, Theorem~\ref{Theorem criterion of F-nilpotence}, and Lemma~\ref{lemma faithfully flat CM fiber}. Under mild hypotheses we will be able to remove the assumption that the closed fiber is Cohen-Macaulay in Section~\ref{Section open results} using our open loci results. 
\begin{theorem}
\label{theorem F-nilpotent descends}
Let $(R,\fm)\to (S,\fn)$ be a faithfully flat map of local rings of prime characteristic $p>0$ with Cohen-Macaulay closed fiber.
\begin{enumerate}
\item If $S$ is weakly $F$-nilpotent then $R$ is weakly $F$-nilpotent.
\item  If $S$ is $F$-nilpotent then $R$ is $F$-nilpotent.
\end{enumerate}  
\end{theorem}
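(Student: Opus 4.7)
The plan is to deduce weak $F$-nilpotence (and, subsequently, $F$-nilpotence) of $R$ from the corresponding hypothesis on $S$ by lifting each local cohomology class $\gamma\in H^i_\fm(R)$ to a class $\tilde\gamma\in H^{i+\ell}_\fn(S)$, where $\ell=\dim(S/\fm S)$, and then descending via the colon-capturing properties of the regular sequence supplied by Lemma~\ref{lemma faithfully flat CM fiber}.

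For the setup I would use Lemma~\ref{lemma faithfully flat CM fiber} to produce $T_1,\ldots,T_\ell\in S$ regular on both $S$ and $S/\fm S$, and fix a filter regular sequence $x_1,\ldots,x_i$ of $R$ of length $i\leq d=\dim R$. Since $R\to S/(T_1,\ldots,T_\ell)S$ is faithfully flat with $0$-dimensional closed fiber, Lemma~\ref{Lemma preserve filter regular sequence} gives that $x_1,\ldots,x_i$ remains filter regular there, and concatenating with $T_1,\ldots,T_\ell$ yields a filter regular sequence of length $i+\ell$ in $S$. Theorem~\ref{Nagel-Schenzel} now applies to both sequences. For a representative $\gamma=[\eta+(x_1^t,\ldots,x_i^t)]\in H^i_\fm(R)$ with $\eta\in(x_1^t,\ldots,x_i^t):_R\fm^\infty$, I set $\tilde\gamma=[\eta+(T_1^t,\ldots,T_\ell^t,x_1^t,\ldots,x_i^t)S]\in H^{i+\ell}_\fn(S)$; the required colon condition follows because $(T_1,\ldots,T_\ell)+\fm S$ is $\fn$-primary by the Cohen-Macaulay hypothesis on the closed fiber. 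The Frobenius on $\tilde\gamma$ is computed by raising $\eta$ to the appropriate power, exactly mirroring the action on $\gamma$.

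For (1), whenever $i<d$ we have $i+\ell<\dim S$, so weak $F$-nilpotence of $S$ furnishes an $e$ with $F^e(\tilde\gamma)=0$, i.e.\ some $s$ with
\[
\eta^{p^e}(T_1\cdots T_\ell)^s(x_1\cdots x_i)^s\in(T_1^{tp^e+s},\ldots,T_\ell^{tp^e+s},x_1^{tp^e+s},\ldots,x_i^{tp^e+s})S.
\]
The decisive step is to eliminate the factor $(T_1\cdots T_\ell)^s$. Because $R/(x_1^{tp^e+s},\ldots,x_i^{tp^e+s})\to S/(x_1^{tp^e+s},\ldots,x_i^{tp^e+s})S$ is faithfully flat with the same closed fiber $S/\fm S$, \cite[Theorem~23.3]{Matsumura} ensures that $T_1,\ldots,T_\ell$ remains a regular sequence modulo $(x_1^{tp^e+s},\ldots,x_i^{tp^e+s})S$, and the standard colon identity for regular sequences then yields
\[
\eta^{p^e}(x_1\cdots x_i)^s\in(T_1^{tp^e},\ldots,T_\ell^{tp^e},x_1^{tp^e+s},\ldots,x_i^{tp^e+s})S.
\]
Applying Lemma~\ref{lemma faithfully flat CM fiber}(2) to the $R$-parameter sequence $x_1^{tp^e+s},\ldots,x_i^{tp^e+s}$ together with the regular sequence $T_1^{tp^e},\ldots,T_\ell^{tp^e}$ (still regular on $S/\fm S$) collapses this to $\eta^{p^e}(x_1\cdots x_i)^s\in(x_1^{tp^e+s},\ldots,x_i^{tp^e+s})R$, so $F^e(\gamma)=0$ in $H^i_\fm(R)$.

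For (2), having (1) in hand, I take $\gamma\in 0^*_{H^d_\fm(R)}$ with test element $c\in R^\circ$; faithful flatness forces every minimal prime of $S$ to contract to a minimal prime of $R$, so $c\in S^\circ$. Multiplying the $R$-side relations witnessing $cF^e(\gamma)=0$ by $(T_1\cdots T_\ell)^s$ promotes them into $S$-side relations showing $cF^e(\tilde\gamma)=0$ for all $e\gg 0$, whence $\tilde\gamma\in 0^*_{H^{d+\ell}_\fn(S)}$. The hypothesis on $S$ then gives $\tilde\gamma\in 0^F_{H^{d+\ell}_\fn(S)}$, and the descent argument of (1) recovers $F^e(\gamma)=0$, so $\gamma\in 0^F_{H^d_\fm(R)}$. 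The hardest part of the argument, both technically and conceptually, is the descent step: reconciling the large ideal in $S$ in which $\tilde\gamma$'s Frobenius dies with a parameter-sized ideal in $R$. This requires both the stability of $T_1,\ldots,T_\ell$ as a regular sequence after quotienting by pulled-back parameter ideals and the intersection identity of Lemma~\ref{lemma faithfully flat CM fiber}(2).
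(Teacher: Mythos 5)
Your proof is correct and takes a genuinely different route from the paper's. The paper reduces to the quotient $S/(T_1,\ldots,T_\ell)$, which is faithfully flat over $R$ with zero-dimensional closed fiber, and invokes the relative-Frobenius criterion of Theorem~\ref{Theorem criterion of F-nilpotence} to convert (weak) $F$-nilpotence of $S$ into relative (weak) $F$-nilpotence of $S/(\underline{T})$; all local cohomology computations there stay in degree $i$ over the small ring. You instead lift each class $\gamma\in H^i_\fm(R)$ to a class $\tilde\gamma\in H^{i+\ell}_\fn(S)$ via the concatenated filter regular sequence $T_1,\ldots,T_\ell,x_1,\ldots,x_i$, apply the (weak) $F$-nilpotence hypothesis on $S$ directly in degree $i+\ell<\dim S$, strip the auxiliary $(T_1\cdots T_\ell)^s$ factor through the colon identity for regular sequences, and contract to $R$ via Lemma~\ref{lemma faithfully flat CM fiber}(2). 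Both arguments turn on the Nagel-Schenzel isomorphism and the Cohen-Macaulayness of the closed fiber, but yours is more self-contained: it bypasses the relative tight/Frobenius closure formalism and sidesteps the excellence and equidimensionality hypotheses carried by Theorem~\ref{Theorem criterion of F-nilpotence} (which the paper handles by passing to completions). The flat descent of $T_1,\ldots,T_\ell$ as a regular sequence modulo extended parameter ideals, together with the colon computation you use, is essentially the content that the relative-nilpotence criterion packages away, so the two proofs are close in spirit but distinct in bookkeeping.
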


\begin{proof}
Suppose $S/\fm S$ is an $\ell$-dimensional Cohen-Macaulay local ring. We utilize prime avoidance to choose $T_1,\ldots, T_\ell$ elements of $S$ which are regular on both $S$ and $S/\fm S$. Suppose further that $S$ is weakly $F$-nilpotent. Let $0\leq i< d$ and $x_1,\ldots, x_i$ a filter regular sequence of length $i$ of $R$ and let $x=x_1\cdots x_i$. By Theorem~\ref{Nagel-Schenzel}
\[
H^i_\fm(R)\cong H^0_\fm(H^i_{(x_1,\ldots,x_i)}(R))\cong  \varinjlim\left(\frac{(x_1^t,\ldots,x_i^t):_R\fm^\infty}{(x_1^t,\ldots,x_i^t)}\xrightarrow{\cdot x}\frac{(x_1^{t+1},\ldots,x_i^{t+1}):_R\fm^\infty}{(x_1^{t+1},\ldots,x_i^{t+1})}\right).
\]
Suppose that $\eta+(x_1,x_2,\ldots,x_i)$ represents an element of $H^i_\fm(R)$ in the above direct limit system. In particular, there exists an $N\in \mathbb{N}$ so that $\fm^N\eta\subseteq (x_1,\ldots, x_i)$. By (1) of Lemma~\ref{lemma faithfully flat CM fiber} the map $R\to S/(\underline{T})$ is faithfully flat with $0$-dimensional closed fiber and therefore $x_1,\ldots,x_i$ is a filter regular sequence of length $i$ in $S/(\underline{T})$ by Lemma~\ref{Lemma preserve filter regular sequence}. Invoking Theorem~\ref{Nagel-Schenzel} again we find that
\[
H^i_\fn(S/(\underline{T}))\cong  \varinjlim\left(\frac{(x_1^t,\ldots,x_i^t,\underline{T})S:_S\fn^\infty}{(x_1^t,\ldots,x_i^t,\underline{T})S}\xrightarrow{\cdot x}\frac{(x_1^{t+1},\ldots,x_i^{t+1},\underline{T})S:_S\fn^\infty}{(x_1^{t+1},\ldots,x_i^{t+1},\underline{T})S}\right).
\]
Because $R\to S/(\underline{T})$ has $0$-dimensional fiber we see that by increasing $N$ as necessary, $\fn^N\eta\subseteq (x_1,\ldots,x_i,\underline{T})S$ and therefore $\eta+(x_1,\ldots,x_i,T)S$ represents an element of $H^i_\fn(S/(\underline{T}))$ in the above direct limit system. We are assuming $S$ is weakly $F$-nilpotent. Thus $S/(\underline{T})$ is weakly $F$-nilpotent relative to $S$ by Theorem~\ref{Theorem criterion of F-nilpotence}. Therefore for all $e\gg0$ there exists a $t\in \mathbb{N}$ such that $x^{tp^e}\eta\in (x^{(t+1)p^e}_1,\ldots,x^{(t+1)p^e}_i, \underline{T}^{p^e})S$. By Lemma~\ref{lemma faithfully flat CM fiber} $(x^{(t+1)p^e}_1,\ldots,x^{(t+1)p^e}_i,\underline{T}^{p^e})S\cap R=(x^{(t+1)p^e}_1,\ldots,x^{(t+1)p^e}_i)R$. Therefore for all $e\gg0$ there exists $t\in \mathbb{N}$ such that $x^{tp^e}\eta\in (x^{(t+1)p^e}_1,\ldots,x^{(t+1)p^e}_i)R$, i.e., for all $e\gg 0$ the $e$th Frobenius action of $H^i_\fm(R)$ maps the element represented by $\eta+(x_1,\ldots,x_i)$ to $0$.

We now assume further that $S$ is $F$-nilpotent. To verify $R$ is $F$-nilpotent we may pass to the completion of $R$ and $S$ and assume $R$ is reduced, see \cite[Proposition~2.8]{PolstraQuy}. We aim to show that $0^F_{H^d_\fm(R)}=0^*_{H^d_\fm(R)}$. Let $x_1,\ldots, x_d$ be a system of parameters of $R$ and $\eta+(x_1,\ldots,x_d)$ represent an element of $0^*_{H^d_\fm(R)}$ under the direct limit identification
\[
H^d_\fm(R)\cong \varinjlim\left(\frac{R}{(x_1^t,\ldots,x_d^t)}\xrightarrow{\cdot x}\frac{R}{(x_1^{t+1},\ldots,x_d^{t+1})}\right).
\] 
Then there exists $c\in R^\circ$ with the property that for all $e\in\mathbb{N}$ there exists $t\in \mathbb{N}$ so that $c\eta^{p^e}x^{tp^e}\in (x_1^{(t+1)p^e},\ldots,x_d^{(t+1)p^e})$. By Theorem~\ref{Theorem criterion of F-nilpotence} we have that $0^{F_S}_{H^d_\fn(S/(\underline{T}))}=0^{*_S}_{H^d_\fn(S/(\underline{T}))}$. The module $0^{*_S}_{H^d_\fn(S/(\underline{T}))}$ is the tight closure of $0$ in $H^d_\fm(S/(\underline{T}))$ as an $S$-module and $0^{F_S}_{H^d_\fn(S/(\underline{T}))}$ is the Frobenius closure of $0$ in $H^d_\fm(S/(\underline{T}))$ as an $S$-module. Moreover, observe that $c\in S^\circ$. Therefore we find that $\eta+(x_1,\ldots,x_d,\underline{T})S$ represents an element of the tight closure of $0$ in $H^d_\fm(S/(\underline{T}))$ as an $S$-module. Since $S$ is $F$-nilpotent we have that for all $e\gg 0$ there exists a $t\in \mathbb{N}$ so that $\eta^{p^e}x^{tp^e}\in (x_1^{(t+1)p^e},\ldots,x_d^{(t+1)p^e},T^{p^e})S$. By (2) of Lemma~\ref{lemma faithfully flat CM fiber} we find that for all $e\gg0$ there exists $t\in \mathbb{N}$ so that $\eta^{p^e}x^{tp^e}\in (x_1^{(t+1)p^e},\ldots,x_d^{(t+1)p^e})$, i.e., $\eta+(x_1,\ldots, x_d)$ represents the class of an element of $0^F_{H^d_{\fm}(R)}$.
\end{proof}

The ascent results of Theorem~\ref{Main Theorem Ascent} are broken into several statements. We first show the property of being (weakly) $F$-nilpotent ascends under faithfully flat maps with Cohen-Macaulay fibers.

\begin{theorem}
\label{Theorem F-nilpotent acends}
Let $(R,\fm)\to (S,\fn)$ be a faithfully flat local ring map of prime characteristic $p>0$ rings with Cohen-Macaulay closed fiber $S/\fm S$.
\begin{enumerate}
\item If $R$ is weakly $F$-nilpotent then $S$ is weakly $F$-nilpotent.
\item Suppose additionally that $R\to S$ has geometrically regular fibers and $R/\sqrt{0}$ and $S\otimes_R R/\sqrt{0}$ have a test element in common. If $R$ is $F$-nilpotent then $S$ is $F$-nilpotent.
\end{enumerate} 
\end{theorem}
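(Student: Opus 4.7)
The plan in both parts is to verify the characterizations of (weakly) $F$-nilpotence in Theorem~\ref{Theorem criterion of F-nilpotence} on $S$ by transporting the corresponding characterizations on $R$ via the base change provided by Lemma~\ref{lemma faithfully flat CM fiber}.

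For Part (1), I would first use Lemma~\ref{lemma faithfully flat CM fiber} to fix $T_1, \ldots, T_\ell \in S$ that form a regular sequence on both $S$ and $S/\fm S$; this ensures $R \to S/\underline{T}^N$ is faithfully flat with zero-dimensional closed fiber for every $N \in \mathbb{N}$. Fix a filter regular sequence $x_1, \ldots, x_d$ of $R$; by Lemma~\ref{Lemma preserve filter regular sequence}, $x_1, \ldots, x_j$ remains filter regular in $S/\underline{T}^N$ for each $j \le d$, and the combined sequence $T_1, \ldots, T_\ell, x_1, \ldots, x_j$ is filter regular in $S$ of length $\ell + j$. After reducing to the case that $R$ is complete, reduced, and equidimensional (for which weak $F$-nilpotence is preserved by standard reductions), Theorem~\ref{Theorem criterion of F-nilpotence}(1) lets us assume $R/J'$ is weakly $F$-nilpotent relative to $R$ for every $J' = (x_1^N, \ldots, x_j^N)R$. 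Setting $J = (T_1^N, \ldots, T_\ell^N, x_1^N, \ldots, x_j^N) S$, the isomorphism $S/J \cong (R/J') \otimes_R (S/\underline{T}^N)$ makes $S/J$ faithfully flat over $R/J'$ with zero-dimensional closed fiber; flat base change then gives $H^k_\fn(S/J) \cong H^k_\fm(R/J') \otimes_{R/J'} (S/J)$, and an analogous identification for $J^{[p^e]}$. Under these identifications the relative Frobenius $F^e_S$ factors as $F^e_R$ on the $R/J'$-component tensored with the $p^e$-power map on the $S/\underline{T}^N$-component. The relative weak $F$-nilpotence of $R/J'$ therefore annihilates $H^k_\fn(S/J)$ under $F^e_S$ for large $e$, so $S/J$ is weakly $F$-nilpotent relative to $S$; invoking Theorem~\ref{Theorem criterion of F-nilpotence}(1) in reverse then gives that $S$ is weakly $F$-nilpotent.

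For Part (2), Part (1) already gives that $S$ is weakly $F$-nilpotent, so only the equality $0^F_{H^{d+\ell}_\fn(S)} = 0^*_{H^{d+\ell}_\fn(S)}$ remains. The strategy parallels Part (1), using Theorem~\ref{Theorem criterion of F-nilpotence}(2) in place of (1), with the same combined filter regular sequence specialized to $j = d$ so that $T_1, \ldots, T_\ell, x_1, \ldots, x_d$ is a full system of parameters for $S$. The plan is to reduce the equality of relative tight closure and relative Frobenius closure on $H^d_\fm(S/J)$ to the analogous equality on $H^d_\fm(R/J')$, which holds by $F$-nilpotence of $R$ combined with Theorem~\ref{Theorem criterion of F-nilpotence}(2). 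For this reduction two new inputs are essential: the geometric regularity of the fibers, which via Theorem~\ref{Theorem Radu-Andre} makes the relative Frobenius $F^e_{S/R}$ faithfully flat and thereby transports tight closure computations cleanly across $R \to S$; and the common test element $c$ for $R/\sqrt{0}$ and $S \otimes_R R/\sqrt{0}$, which guarantees that tight closure on both sides is detected by the same multiplier, so an element of $0^*_{H^d_\fm(S/J)}$ relative to $S$ pulls back to an element of the relative tight closure in $H^d_\fm(R/J')$ relative to $R$.

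The main obstacle in both parts will be the precise verification that the relative Frobenius (and, for Part (2), the relative tight closure) decomposes compatibly under the flat base change $R/J' \to S/J$ in the tensor-product form described. In Part (2) this is particularly delicate, as one must use the Radu-Andr\'e theorem to pass between the two sides and the common test element to ensure the same $c$ detects tight closure relative to either $R$ or $S$; without these hypotheses the tight closure information does not transport, and one would only recover weak $F$-nilpotence rather than full $F$-nilpotence.
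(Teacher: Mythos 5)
Your plan tracks the paper's proof quite closely: both arguments choose $T_1,\dots,T_\ell$ as in Lemma~\ref{lemma faithfully flat CM fiber}, invoke the criterion of Theorem~\ref{Theorem criterion of F-nilpotence} to reduce to a relative statement modulo the $\underline{T}^N$, and in part (2) bring in Theorem~\ref{Theorem Radu-Andre} for the faithful flatness of $F^e_{S/R}$ together with the common test element $c$. You have correctly identified the role of each hypothesis. That said, there are two points where the paper's route is cleaner and where your sketch, as written, is not yet a proof.

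First, you work with the combined sequence $T_1,\dots,T_\ell,x_1,\dots,x_j$ and the ideal $J=(\underline{T}^N,\underline{x}^N)S$, comparing $S/J$ to $R/J'$. This is admissible but superfluous: Theorem~\ref{Theorem criterion of F-nilpotence} allows any filter regular sequence, and the paper simply takes the length-$\ell$ sequence $\underline{T}$, reducing to showing $S/\underline{T}^N$ is weakly (resp.\ fully) $F$-nilpotent relative to $S$. Folding the $x_i$'s into $J$ buys nothing and introduces the extra bookkeeping of which quotient of $R$ you are comparing to.

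Second, and more substantively, the step you flag yourself as ``the main obstacle'' --- that ``the relative Frobenius $F^e_S$ factors as $F^e_R$ on the $R/J'$-component tensored with the $p^e$-power map on the $S/\underline{T}^N$-component'' --- is precisely where all of the work lives, and the way you phrase it does not quite make sense as a map of $S$-modules (these maps are $p^e$-linear, not linear, so they do not literally tensor). The paper's resolution is to use the genuine $S$-linear factorization $F^e_S = F^e_{S/R}\circ\left(F^e_R\otimes_R S\right)$ of $S\to F^e_*R\otimes_R S\to F^e_*S$. After tensoring with $S/\underline{T}^N$ and taking local cohomology, the first arrow becomes
\[
H^i_\fm(R)\otimes_R S/\underline{T}^N \longrightarrow H^i_\fm(F^e_*R)\otimes_R S/\underline{T}^N,
\]
which is the $R$-linear Frobenius action on $H^i_\fm(R)$ tensored with $S/\underline{T}^N$, and this is the zero map for $e\gg 0$ (and all $i<d$) once $R$ is weakly $F$-nilpotent, via finiteness of the $\HSL$-numbers. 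That factorization, not a ``tensor of Frobenii,'' is what makes the argument airtight. For part (2) the paper inserts multiplication by $c\in R$ into the same factorization, uses faithful flatness of $F^e_{S/R}$ to strip off the last arrow, and uses faithful flatness of $R\to S/\underline{T}^N$ together with $F$-nilpotence of $R$ to strip off multiplication by $c$, landing exactly where you want. So your proposal is essentially correct in its choice of tools and overall strategy, but you should replace the heuristic tensor-decomposition of $F^e_S$ with the explicit two-step $S$-linear factorization through $F^e_*R\otimes_R S$, and you can drop the $x_i$'s entirely.
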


\begin{proof}
As in Lemma~\ref{lemma faithfully flat CM fiber} and Theorem~\ref{theorem F-nilpotent descends} we begin by choosing elements $T_1,\ldots, T_\ell$ in $S$ which are regular in $S$ and $S/\fm S$. Suppose first that $R$ is weakly $F$-nilpotent. Utilizing the notation of Notation~\ref{Notation T's} and the criteria of Theorem~\ref{Theorem criterion of F-nilpotence} the ring $S$ is weakly $F$-nilpotent if and only if $S/(\underline{T}^N)$ is weakly $F$-nilpotent relative to $S$ for each $N\in \mathbb{N}$.

Recall that the Frobenius action $S\to F^e_*S$ factors as $S\to F^e_*R\otimes_R S\xrightarrow{F^e_{S/R}} F^e_*S$ where $F^e_{S/R}$ is the relative Frobenius map and $S\to F^e_*R\otimes_R S$ is the base change of the Frobenius map of $R$ to $S$, see Subsection~\ref{Subsection relative Frobenius}. Tensoring with $S/(\underline{T}^N)$ we find that the relative Frobenius map $S/(\underline{T}^N)\to F^e_*S/(\underline{T}^{Np^e})$ has a factorization
\[
S/(\underline{T}^N)\to F^e_*R\otimes_R S/(\underline{T}^N)\xrightarrow{F^e_{S/R}} F^e_*S/(\underline{T}^{Np^e})
\]
In particular, Frobenius actions of local cohomology modules of $S$ with support in $\fn$ has a factorization 
\[
H^i_\fn(S/(\underline{T}^N))\to H^i_\fn(F^e_*R\otimes_R S/(\underline{T}^N))\xrightarrow{F^e_{S/R}} H^i_\fn(F^e_*S/(\underline{T}^{Np^e})).
\]
By Lemma~\ref{lemma faithfully flat CM fiber} the map $R\to S/(\underline{T}^N)$ is faithfully flat with $0$-dimensional fiber. Therefore the map $H^i_\fn(S/(\underline{T}^N))\to H^i_\fn(F^e_*R\otimes_R S/(\underline{T}^N))$ may be identified with
\[
H^i_\fm(R)\otimes_R S/(\underline{T}^N) \to H^i_\fm(F^e_*R)\otimes_R S/(\underline{T}^N).
\]
In other words, the relative Frobenius actions of local cohomology of $S/(\underline{T}^N)$ factor through the base change of the Frobenius actions of local cohomology of $R$. Therefore if $R$ is weakly $F$-nilpotent, i.e. all high enough iterates of Frobenius actions on lower local cohomology are $0$, then $S$ is also weakly $F$-nilpotent.

We now move to (2). Without loss of generality we may assume $R$ is reduced and therefore by assumption $R$ and $S$ have a test element $c$ in common. Moreover, we are assuming $R\to S$ has geometrically regular fibers, equivalently the relative Frobenius map $F^e_{S/R}:F^e_*R\otimes_R S\to F^e_*S$ is faithfully flat for all $e\in \mathbb{N}$ by Theorem~\ref{Theorem Radu-Andre}. The ring $S$ is $F$-nilpotent if and only if $S/(\underline{T}^N)$ is $F$-nilpotent relative to $S$ for all $N\in \mathbb{N}$ by Theorem~\ref{Theorem criterion of F-nilpotence}. In particular, by $(1)$ of this theorem it remains to show that $0^{F_S}_{H^d_\fm(S/(\underline{T}^N))}=0^{*_S}_{H^d_\fm(S/(\underline{T}^N))}$ for all $N\in \mathbb{N}$.

Because $c\in R$ the composition of $S$-linear maps
\[
S\to F^e_*S\xrightarrow{\cdot F^e_*c} F^e_*S
\]
can be factored as
\[
S\to F^e_*R\otimes_R S\xrightarrow{\cdot F^e_*c\otimes_R S}F^e_*R\otimes _R S\xrightarrow{F^e_{S/R}}F^e_*S
\]
where $S\to F^e_*R \otimes_R S$ is the base change of the Frobenius map of $R$. Base changing by $S/(\underline{T}^N)$ and examining the induced map of the top local cohomology modules we find that
\begin{equation}
\label{Equation factor this map}
H^d_\fn(S/(\underline{T}^N))\xrightarrow{F^e_S} H^d_\fn(F^e_*S/(\underline{T}^{Np^e}))\xrightarrow{\cdot F^e_*c} H^d_\fn (F^e_*S/(\underline{T}^{Np^e}))
\end{equation}
factors as 
\begin{align}
\begin{split}
\label{Equation this is the factored map}
H^d_\fn(S/(\underline{T}^{N}))\xrightarrow{F^e_R\otimes_R S/(\underline{T}^N)}& H^d_\fm(F^e_*R)\otimes_R S/(\underline{T}^N)\\
&\xrightarrow{\cdot F^e_*c\otimes _R S/(\underline{T}^N)}H^d_\fm(F^e_*R)\otimes_R S/(\underline{T}^N)\xrightarrow{F^e_{S/R}\otimes_R S/(\underline{T}^N)} H^d_\fn(F^e_S/(T^{Np^e})).
\end{split}
\end{align}

We aim to show that if $\eta\in H^d_\fn(S/(\underline{T}^N))$ is an element of the kernel of the composition of maps in \ref{Equation factor this map} for all $e\gg 0$ then $\eta$ is an element of the kernel of $F^e_S$ for all $e\gg 0$. Given such an element $\eta$  we find that $\eta$ is an element of the kernel of $(\cdot F^e_*c\otimes_R S/(\underline{T}^N))\circ (F^e_R\otimes_R S/(\underline{T}^N))$ from \ref{Equation this is the factored map} for all $e\gg 0$ since $F^e_{S/R}$ is faithfully flat. But $R\to S/(\underline{T}^N)$ is faithfully flat and we are assuming $R$ is $F$-nilpotent, i.e. the kernel of $\cdot F^e_*c\circ F^e_R$ agrees with the kernel of $F^e_R$ for all $e\gg 0$. Therefore $\eta $ is an element of the kernel of $F^e_R\otimes_R S/(\underline{T}^N)$ for all $N\gg 0$. It follows that $\eta$ is an element of the kernel of $F^e_S$ for all $e\gg 0$ since $F^e_S$ factors as $F^e_{S/R}\circ (F^e_*R\otimes_R S)$.
\end{proof}

\begin{theorem}
\label{faithfully flat purely inseparable}
Let $(R,\fm)\to (S,\fn)$ be a purely inseparable local homomorphism of prime characteristic $p>0$ rings.
\begin{enumerate}
\item If $R$ is weakly $F$-nilpotent then $S$ is weakly $F$-nilpotent.
\item If $R\to S$ is faithfully flat and $R$ is $F$-nilpotent then $S$ is $F$-nilpotent.
\end{enumerate}  
\end{theorem}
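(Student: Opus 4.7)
The approach is to convert both statements into \v{C}ech-level computations and exploit that a purely inseparable local homomorphism $\phi\colon(R,\fm)\to(S,\fn)$ satisfies $\sqrt{\fm S}=\fn$; in particular $\dim R=\dim S=:d$. The radical equality gives, via change of rings for local cohomology, the identification $H^i_\fn(M)=H^i_\fm(M)$ for every $S$-module $M$, so we may compute $H^i_\fn(S)$ from the \v{C}ech complex $\check{C}^\bullet(\ux;S)$ on a system of parameters $\ux=x_1,\ldots,x_d$ of $R$, viewed inside $S$ via $\phi$.

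For part (1), I would fix $i<d$ and $\eta\in H^i_\fn(S)$ and pick a \v{C}ech cocycle representative $(s_J/x^{t_J})_J$ with $s_J\in S$. Only finitely many entries appear, so pure inseparability produces $e_0$ with $s_J^{p^{e_0}}\in\phi(R)$ for every $J$; then $F^{e_0}(\eta)$ admits a representative in the subcomplex $\check{C}^\bullet(\ux;R)$ mapped by $\phi$ into $\check{C}^\bullet(\ux;S)$. Lifting coordinatewise along $\phi$ produces a class $\beta\in H^i_\fm(R)$ whose image under the natural map $\iota\colon H^i_\fm(R)\to H^i_\fn(S)$ is $F^{e_0}(\eta)$. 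Since $F_S$ restricts to $F_R$ on $\phi(R)$, the map $\iota$ commutes with Frobenius actions; weakly $F$-nilpotence of $R$ then yields an $e'$ with $F^{e'}(\beta)=0$, whence $F^{e_0+e'}(\eta)=\iota(F^{e'}(\beta))=0$.

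For part (2), statement (1) already gives that $S$ is weakly $F$-nilpotent, so it remains only to verify $0^*_{H^d_\fn(S)}\subseteq 0^F_{H^d_\fn(S)}$. Faithful flatness upgrades the picture at degree $i=d$: flat base change gives $H^d_\fn(S)\cong H^d_\fm(R)\otimes_R S$, and the natural map $\iota$ becomes the inclusion $\alpha\mapsto\alpha\otimes 1$, which is injective. Given $\eta\in 0^*_{H^d_\fn(S)}$ with multiplier $c\in S^\circ$, use that $\Spec S\to\Spec R$ is a bijection to deduce that $c':=c^{p^{e_1}}\in R^\circ$ whenever $c^{p^{e_1}}\in\phi(R)$; raising $cF^e(\eta)=0$ to the $p^{e_1}$-th power yields $c'F^e(\eta)=0$ for all $e\gg 0$. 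Rerunning the \v{C}ech argument at top degree, write $F^e(\eta)=\iota(\beta_e)$ for $e\geq e_0$ with $\beta_e=F^{e-e_0}(\beta_{e_0})$; injectivity of $\iota$ forces $c'F^{e-e_0}(\beta_{e_0})=0$ for $e\gg 0$, so $\beta_{e_0}\in 0^*_{H^d_\fm(R)}=0^F_{H^d_\fm(R)}$ by $F$-nilpotence of $R$, and a final Frobenius iterate on $R$ kills $\beta_{e_0}$, hence $\eta$.

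The main obstacle I anticipate is purely organizational: verifying that the identifications in play (\v{C}ech versus local cohomology, change of rings under $\sqrt{\fm S}=\fn$, and flat base change in (2)) all intertwine with the Frobenius actions, and that a coordinatewise lift of a \v{C}ech cocycle along $\phi$ really produces a well-defined cohomology class independent of the lifting choices modulo $\ker\phi$. Once these compatibilities are in place, the substantive content is the simple observation that a single local cohomology class involves only finitely many elements of $S$, all of which a single Frobenius iterate pushes into $\phi(R)$, reducing each claim to the hypothesis on $R$.
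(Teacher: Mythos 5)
Your proposal is correct and lands on the same central mechanism as the paper's proof: raise the cohomology class to a high enough $p$-power so that a representative lies in the image of $\phi$, pull it back to $H^{\bullet}_\fm(R)$ via a map commuting with Frobenius, and then apply the hypothesis on $R$. The differences are in the scaffolding. The paper realizes $H^i_\fm$ for $i<d$ via filter regular sequences and the Nagel--Schenzel isomorphism (Theorem~2.1), observing that a filter regular sequence in $R$ remains one in $S$ because $\Spec S\to\Spec R$ is a homeomorphism, whereas you use the \v{C}ech complex on a full system of parameters; these compute the same thing and your choice is arguably cleaner because you need not track that the raised element still lies in $(x_1^{p^{e_0}},\ldots,x_i^{p^{e_0}}):_R\fm^\infty$. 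In part (2), the paper passes to completions and invokes a test element $c\in R^\circ$ common to $R$ and $S$ to feed the $S$-tight closure condition down to $R$, while you instead raise the given multiplier $c\in S^\circ$ to a $p$-power landing in $\phi(R)$ and use the $\Spec$-bijection to see this power avoids minimal primes of $R$ --- this sidesteps any appeal to the existence of shared test elements. Your flagged concern about whether the coordinatewise lift along $\phi$ is a cocycle is genuine but easily resolved: since $\ker\phi$ consists of nilpotents and (weak) $F$-nilpotence depends only on the reduced structure (a fact the paper also uses), one may replace $R$ and $S$ by their reductions and assume $\phi$ injective, whence localizations of $\phi$ are injective and $\delta(\tilde z)=0$ follows from $\phi(\delta(\tilde z))=\delta(F^{e_0}(z))=0$; moreover you only need \emph{existence} of some $\beta$ with $\iota(\beta)=F^{e_0}(\eta)$, not uniqueness, so well-definedness of the lift is a non-issue. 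The one phrase to tighten in part (2) is ``raising $cF^e(\eta)=0$ to the $p^{e_1}$-th power yields $c'F^e(\eta)=0$'': applying $F^{e_1}$ to $c\cdot F^e(\eta)=0$ gives $c^{p^{e_1}}\cdot F^{e+e_1}(\eta)=0$, so the index shifts, but this still delivers $c'F^{e}(\eta)=0$ for all $e\gg 0$ as needed.
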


\begin{proof}
Because $R\to S$ is purely inseparable the induced map of spectra is a bijection. In particular, $R$ and $S$ have the same Krull dimension. Let $d=\dim (R)=\dim(S)$, suppose $R$ is weakly $F$-nilpotent, and let $0\leq i <d$. Let $x_1,\ldots, x_i$ be a filter regular sequence of $R$. We first observe that $x_1,\ldots, x_i$ is also a filter regular sequence of $S$. Indeed, if $x_{i+1}$ avoids all non-maximal associated primes of $(x_1,\ldots, x_{i})$ in $R$ then $x_{i+1}$ avoids all non-maximal associated primes of $(x_1,\ldots,x_{i})S$ since every prime ideal of $S$ is the radical of a prime ideal extended from $R$. We may therefore identify the local cohomology modules $H^i_\fm(R)$ and $H^i_\fn(S)$ as in Theorem~\ref{Nagel-Schenzel} with respect to the sequence $x_1,\ldots, x_i$.

 Suppose $\eta+(x_1,\ldots,x_i)S$ is a representative of an element of $H^i_\fn(S)$. Because $R\to S$ is purely inseperable there exists an $e_0\in \mathbb{N}$ so that $\eta^{p^{e_0}}\in R$. Thus, after replacing $x_1,\ldots,x_i$ by $x_1^{p^{e_0}},\ldots, x_i^{p^{e_0}}$ and $\eta+(x_1,\ldots,x_i)S$ by its image under the $e_0$-iterate of the Frobenius action on $H^i_\fn(S)$, we may assume $\eta\in R$. In particular, $\eta+(x_1,\ldots,x_i)R$ represents an element of $H^i_\fm(R)$ and therefore, because $R$ is weakly $F$-nilpotent, for all $e\gg0$ there exists $t\in \mathbb{N}$ so that $\eta^{p^e}x^{tp^e}\in (x_1^{(t+1)p^e},\ldots,x_i^{(t+1)p^e})R$ where $x=x_1\cdots x_i$. Passing to $S$ we see the same containment holds in the parameter ideal extended to $S$, i.e., the $e$th Frobenius action of $H^i_\fn(S)$ maps $\eta+(x_1,\ldots,x_i)$ to $0$.

Suppose even further that $R$ is $F$-nilpotent and $R\to S$ is faithfully flat. Similar to before, to check that $S$ is $F$-nilpotent we may assume $R$ and $S$ are complete and that $R$ is reduced. Let $c\in R^\circ$ be a common test element of $R$ and $S$, $x_1,\ldots,x_d$ a common system of parameters of $R$ and $S$, and $\eta+(x_1,\ldots,x_d)S$ a representative of an element of $0^*_{H^d_\fn(S)}$. Replacing $x_1,\ldots,x_d$ by $x_1^{p^{e_0}},\ldots,x_d^{p^{e_0}}$ and $\eta+(x_1,\ldots, x_d)$ by its image under the $e_0$-iterate of the Frobenius action on $H^d_\fm(R)$ we may assume $\eta\in S$. Because $\eta+(x_1,\ldots, x_d)\in 0^*_{H^d_\fn(S)}$ we have that for all $e\in \mathbb{N}$ there exists $t\in \mathbb{N}$ so that $c\eta^{p^e}x^{tp^e}\in (x_1^{(t+1)p^e},\ldots,x_d^{(t+1)p^e})S$. But $R\to S$ is faithfully flat and therefore for all $e\gg 0$ there exists $t\in \mathbb{N}$ so that $c\eta^{p^e}x^{tp^e}\in (x_1^{(t+1)p^e},\ldots,x_d^{(t+1)p^e})R$. Hence $\eta+(x_1,\ldots,x_d)R$ represents an element of $0^*_{H^d_\fm(R)}=0^F_{H^d_\fm(R)}$. Hence for all $e\gg 0$ there exists $t\in \mathbb{N}$ so that $\eta^{p^e}x^{tp^e}\in (x_1^{(t+1)p^e},\ldots,x_d^{(t+1)p^e})R$. Passing to $S$ we find that $\eta+(x_1,\ldots,x_d)$ represents an element of $0^F_{H^d_\fn(S)}$.
\end{proof}

We next prove faithfully flat ascent and descent results for generalized weakly $F$-nilpotent rings. We accomplish this by first realizing generalized weakly $F$-nilpotent rings as rings which are weakly $F$-nilpotent on the punctured spectrum.

\begin{proposition}
\label{proposition gwfn = wfn on punctured spectrum}
Suppose $(R,\fm,k)$ is a local excellent ring of dimension $d$ and of prime characteristic $p>0$. Then $R$ is generalized weakly $F$-nilpotent if and only if $R_\fp$ is weakly $F$-nilpotent for all $\fp \in \Spec(R)\setminus\{\fm\}$. 
\end{proposition}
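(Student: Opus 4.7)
My plan is to translate both conditions into statements about the supports of the stable images of Cartier linear maps on the cohomology modules of the dualizing complex, and then match them via local duality combined with a standard dimension bound.

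Since $R$ is excellent local, it admits a normalized dualizing complex $\omega_R^\bullet$; set $C_i := H^{-i}(\omega_R^\bullet)$, a finitely generated $R$-module for $0 \le i \le d$. By local duality, the $e$th Frobenius action $F^e \colon H^i_\fm(R) \to H^i_\fm(R)$ is Matlis dual to a Cartier linear map $\varphi_i^e \colon F^e_* C_i \to C_i$, with the ascending kernel chain $\ker(F^e) \subseteq H^i_\fm(R)$ dualizing to the descending image chain $\im(\varphi_i^e) \subseteq C_i$. Hartshorne--Speiser--Lyubeznik stabilization forces this chain to stabilize at some $e_0$ to the stable image $S_i := \bigcap_e \im(\varphi_i^e)$, and Matlis duality identifies $(H^i_\fm(R)/0^F_{H^i_\fm(R)})^\vee \cong S_i$. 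Thus $R$ is generalized weakly $F$-nilpotent if and only if $\Supp(S_i) \subseteq \{\fm\}$ for every $0 \le i < d$.

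For each $\fp \ne \fm$, the localized complex $(\omega_R^\bullet)_\fp$ is, up to a shift by $\dim R/\fp$ determined by the catenary dimension function, a normalized dualizing complex for $R_\fp$. Since image formation commutes with localization and the stabilization index $e_0$ is uniform, $(S_i)_\fp$ is the stable Cartier submodule of $R_\fp$ in the corresponding degree. Repeating the duality argument over $R_\fp$, we find that $R_\fp$ is weakly $F$-nilpotent if and only if $(S_i)_\fp = 0$ for every $i$ in the range $[\dim R/\fp,\, \dim R/\fp + \Ht \fp - 1]$, corresponding to $H^j_{\fp R_\fp}(R_\fp)$ with $0 \le j < \Ht \fp$. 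Combined with the standard dimension bound $\dim C_i \le i$ (forcing $(C_i)_\fp = 0$ for $i < \dim R/\fp$) and the concentration of $\omega_{R_\fp}^\bullet$ in degrees $[-\Ht \fp, 0]$ (forcing $(C_i)_\fp = 0$ for $i > \dim R/\fp + \Ht \fp$), the two support conditions on the $S_i$ and the vanishing conditions on $(S_i)_\fp$ across all $\fp \ne \fm$ coincide, yielding the desired equivalence.

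The main obstacle is tracking the normalization shift of the dualizing complex under localization and matching the relevant index ranges. Excellence provides catenarity, so $\dim R/\fp$ is a well-defined dimension function for the dualizing complex, and uniform stabilization of the image chain at $e_0$ ensures that $(S_i)_\fp$ agrees with the stable Cartier image computed intrinsically over $R_\fp$. The index ranges above align cleanly in the equidimensional catenary setting; general catenary rings may require additional care at the top degree $i = \dim R/\fp + \Ht \fp$, where $R_\fp$-weak-$F$-nilpotence does not directly control $(S_i)_\fp$, and I would expect the proof to handle this either via a Noetherian induction on the support of $S_i$ or by an appeal to the structure of the top Cartier image over $R_\fp$.
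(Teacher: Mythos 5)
Your approach is essentially the paper's: translate Frobenius actions on local cohomology into Cartier linear maps on the cohomology modules of a normalized dualizing complex via (Matlis/local) duality, use Hartshorne--Speiser--Lyubeznik stabilization of the image chain, and compare support conditions for the stable images across degrees. The shift and index bookkeeping you do is the right thing, and your observation about $\dim C_i \leq i$ handling the degrees above $-\dim R/\fp$ is correct; the paper elides this but it is implicit in its appeal to Proposition~\ref{Proposition Frobenius is just Grothendieck Trace}.

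However, there is a genuine gap: your argument requires $R$ to be $F$-finite, and you never address the reduction from excellent local to $F$-finite. The Cartier-module formulation --- writing the dual of $F^e \colon H^i_\fm(R) \to F^e_*H^i_\fm(R)$ as an $R$-linear map $F^e_*C_i \to C_i$ of finitely generated modules and invoking stabilization of image chains --- only makes sense when $F^e_*R$ is module-finite over $R$; more fundamentally, the evaluation-at-$1$ Grothendieck trace $(F^e)^\vee \colon F^e_*\omega_R^\bullet \to \omega_R^\bullet$ as a map of coherent complexes requires Frobenius to be finite. For a general excellent local ring this breaks down. The paper's proof does the Cartier-module argument only in the $F$-finite case, then reduces the excellent local case to it by first completing (using ascent under faithfully flat maps with geometrically regular fibers, Theorem~\ref{Theorem F-nilpotent acends}) and then applying the $\Gamma$-construction to get a faithfully flat purely inseparable map $R \to R^\Gamma$ to an $F$-finite ring, together with Theorem~\ref{faithfully flat purely inseparable}. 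You would need this reduction (or an alternative) to make your argument apply under the stated hypotheses. Your closing caveat about the non-equidimensional case is a reasonable concern, but since Proposition~\ref{Proposition Frobenius is just Grothendieck Trace} is stated under a locally equidimensional hypothesis, both your proof and the paper's are implicitly working after a reduction (e.g., to the reduced, equidimensional case); the more pressing omission in your write-up is the $F$-finite reduction.
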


\begin{proof}
First suppose that $R$ is $F$-finite. Let $\omega_R^\bullet$ be a normalized dualizing complex of $R$. The ring $R$ is generalized weakly $F$-nilpotent if and only if $H^i_\fm(R)/0^F_{H^i_\fm(R)}$ has finite length for all $0\leq i< d$. By Proposition~\ref{Proposition Frobenius is just Grothendieck Trace}, this is equivalent to the image of the evaluation at $1$ map being finite for each $0\leq i < d$:
\[
(F^e)^\vee(-i): F^e_*H^{-i}(\omega_R^\bullet)\to H^{-i}(\omega_R^\bullet).
\]
Therefore $R$ is generalized weakly $F$-nilpotent if and only if for all $0\leq i<d$ and $e\gg 0$ the image of $(F^e)^\vee(-i)$ is finite, i.e. $(F^e)^\vee(-i)$ is the $0$-map on the punctured spectrum, i.e. $R$ is weakly $F$-nilpotent on the punctured spectrum.

Now suppose $R$ is an excellent local ring. Then $R\to \widehat{R}$ has geometrically regular fibers. By Theorem~\ref{Theorem F-nilpotent acends} we may assume $R$ is complete. In which case, there exists a faithfully flat and purely inseparable map $R\to R^\Gamma$ to some $F$-finite local domain $R^\Gamma$ (see \cite[Section~6]{HochsterHunekeTransactions}). The proposition now follows by Theorem~\ref{Theorem F-nilpotent acends} and the previous paragraph.
\end{proof}

Proposition~\ref{proposition gwfn = wfn on punctured spectrum} allows us to understand the property of being generalized weakly $F$-nilpotent under faithfully flat local ring maps.

\begin{theorem}
\label{theorem generalized weakly F-nilpotent faithfully flat map}
Let $(R,\fm)\to (S,\fn)$ be a faithfully flat map of local rings of prime characteristic $p>0$ with $0$-dimensional fiber.
\begin{enumerate}
\item If $S$ is generalized weakly $F$-nilpotent then $R$ is generalized weakly $F$-nilpotent.
\item If $R$ is generalized weakly $F$-nilpotent then $S$ is generalized weakly $F$-nilpotent provided one of the following conditions is satisfied:
\begin{itemize}
\item $S/\fm S$ is a field, and $R/\sqrt{0}$ and $S\otimes_R R/\sqrt{0}$ have a test element in common;
\item $R\to S$ is purely inseparable.
\end{itemize}
\end{enumerate}
\end{theorem}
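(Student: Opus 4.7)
The cornerstone of the proof is Proposition~\ref{proposition gwfn = wfn on punctured spectrum}, which characterizes generalized weakly $F$-nilpotent local rings as those that are weakly $F$-nilpotent at every non-maximal prime. The plan is to leverage this characterization together with the descent/ascent results already established in this section to handle parts (1) and (2b) at each non-maximal prime, and to give a direct length argument for part (2a) using the flat base change formula $H^i_\fn(S)\cong H^i_\fm(R)\otimes_R S$.

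For part (1), fix $\fp\in\Spec(R)\setminus\{\fm\}$. Since $R\to S$ is faithfully flat with $0$-dimensional closed fiber, one has $\dim(S/\fp S)=\dim(R/\fp)>0$, so there exists a prime $\fq\in\Spec(S)$ minimal over $\fp S$ with $\fq\neq\fn$. The localized map $R_\fp\to S_\fq$ is then faithfully flat with $0$-dimensional (hence Cohen-Macaulay) closed fiber $S_\fq/\fp S_\fq$. Applying Proposition~\ref{proposition gwfn = wfn on punctured spectrum} to $S$ shows that $S_\fq$ is weakly $F$-nilpotent, so Theorem~\ref{theorem F-nilpotent descends} forces $R_\fp$ to be weakly $F$-nilpotent as well. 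A second application of the proposition, this time to $R$, completes part (1).

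For part (2b), the purely inseparable hypothesis makes $\Spec(S)\to\Spec(R)$ a bijection, so for each $\fq\in\Spec(S)\setminus\{\fn\}$ the contraction $\fp=\fq\cap R$ is non-maximal and the induced local map $R_\fp\to S_\fq$ is faithfully flat and purely inseparable. By the proposition applied to $R$, $R_\fp$ is weakly $F$-nilpotent, and by Theorem~\ref{faithfully flat purely inseparable}(1), $S_\fq$ is weakly $F$-nilpotent. A final application of the proposition to $S$ then concludes this case.

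For part (2a), the field fiber hypothesis $S/\fm S=\kappa$ gives $\fm S=\fn$, so $H^i_\fn(S)\cong H^i_\fm(R)\otimes_R S$ via flat base change. Using the factorization of Frobenius through the relative Frobenius (recall Subsection~\ref{Subsection relative Frobenius}), one verifies the containment $0^F_{H^i_\fm(R)}\otimes_R S\subseteq 0^F_{H^i_\fn(S)}$, so that $H^i_\fn(S)/0^F_{H^i_\fn(S)}$ is a quotient of $\bigl(H^i_\fm(R)/0^F_{H^i_\fm(R)}\bigr)\otimes_R S$. Since $R$ is generalized weakly $F$-nilpotent and $\ell_S(S/\fm S)=1$, this base change has the same finite length over $S$ as its $R$-side counterpart, so the quotient $H^i_\fn(S)/0^F_{H^i_\fn(S)}$ has finite length for each $i<d$, yielding that $S$ is generalized weakly $F$-nilpotent. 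The main obstacle in this part is verifying the containment $0^F_{H^i_\fm(R)}\otimes_R S\subseteq 0^F_{H^i_\fn(S)}$ and checking that the length bookkeeping is compatible with the Frobenius closure; here the common test element hypothesis ensures that the tight closure/Frobenius closure computations carried out on $R$ transfer coherently to $S$.
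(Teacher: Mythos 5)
Your proofs of (1) and (2b) follow the same route the paper takes: reduce to weak $F$-nilpotence on the punctured spectrum via Proposition~\ref{proposition gwfn = wfn on punctured spectrum}, then transport along the localized map $R_\fp\to S_\fq$ using Theorem~\ref{theorem F-nilpotent descends} and Theorem~\ref{faithfully flat purely inseparable}(1). One gap: Proposition~\ref{proposition gwfn = wfn on punctured spectrum} requires the ring to be excellent, which is not assumed in the theorem statement. The paper handles this by first passing to completions (``without loss of generality we may assume $R$ and $S$ are complete''), which you should do as well before invoking the proposition; the reduction is harmless since $\widehat R\to\widehat S$ remains faithfully flat with $0$-dimensional (resp.\ field, resp.\ purely inseparable) fiber, and generalized weak $F$-nilpotence is insensitive to completion. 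With that fix, (1) and (2b) are fine and essentially identical to the paper.

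For (2a) you genuinely depart from the paper, and your route is cleaner. Instead of localizing and chasing through Proposition~\ref{proposition gwfn = wfn on punctured spectrum} (which, as you implicitly notice, runs into the difficulty that the fibers of $R_\fp\to S_\fq$ need not remain Cohen--Macaulay even when $S/\fm S$ is a field), you use flat base change $H^i_\fn(S)\cong H^i_\fm(R)\otimes_R S$ directly at the closed point, the compatibility of Frobenius actions with base change to get $0^F_{H^i_\fm(R)}\otimes_R S\subseteq 0^F_{H^i_\fn(S)}$, and then a length count using $\ell_S(S/\fm S)=1$. This argument is complete and correct as it stands, avoids the excellence and localization machinery entirely, and --- notably --- makes no use of the common test element hypothesis. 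That final sentence of yours, claiming the test element ``ensures that the tight closure/Frobenius closure computations transfer coherently,'' is not accurate and should be struck: Frobenius closure (not tight closure) is all that appears in the definition of generalized weak $F$-nilpotence, and the containment $0^F_{H^i_\fm(R)}\otimes_R S\subseteq 0^F_{H^i_\fn(S)}$ follows purely from the compatibility $F^e_S(z\otimes s)=s^{p^e}\,F^e_R(z)\otimes 1$ under the base change identification, no test element required. In fact, your argument shows that the test-element hypothesis in (2a) of the theorem is superfluous, which is a small improvement on the stated result.
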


\begin{proof}
Without loss of generality we may assume $R$ and $S$ are complete. The theorem readily follows by Proposition~\ref{proposition gwfn = wfn on punctured spectrum}, Theorem~\ref{theorem F-nilpotent descends}, Theorem~\ref{Theorem F-nilpotent acends}, and Theorem~\ref{faithfully flat purely inseparable}.
\end{proof}

Recall that generalized weakly $F$-nilpotent rings were introduced by Maddox in \cite{Maddox} for the purpose of finding a new class of local rings which have finite Frobenius test exponent. Prior to \cite{Maddox}, the following classes of rings were shown to have finite Frobenius test exponents:
\begin{enumerate}
\item Cohen-Macaulay rings, \cite{KatzmanSharp};
\item generalized Cohen-Macaulay rings, \cite{HunekeKatzmanSharpYao};
\item weakly $F$-nilpotent rings, \cite{Quy}.
\end{enumerate}
We provide an example of a generalized weakly $F$-nilpotent ring which is neither generalized Cohen-Macaulay nor weakly $F$-nilpotent. Therefore the main results of \cite{Maddox} were not already covered by the results of \cite{HunekeKatzmanSharpYao, KatzmanSharp, Quy}.

\begin{example}\label{gwfn-example}
Let $(A,\fm,k)$ be a weakly $F$-nilpotent local domain of dimension $2$ and depth $1$, essentially of finite type over a perfect field $K$ of prime characteristic $p$. Let $B' = A[T]$ be a polynomial ring in one variable over $A$ and $\fn = (\underline{x},T)$. Define $(B,\mathfrak{n}) =(B'_{\mathfrak{n}},\mathfrak{n})$. Then, there is a ring $R$ with $A \subset R \subset B$ such that the ideal $\mathfrak{c}=\Ann_R(B/R)$ is a maximal ideal of $R$. Further, $R_\mathfrak{c}$ is neither generalized Cohen-Macaulay nor weakly $F$-nilpotent, but is generalized weakly $F$-nilpotent.
\end{example}

\begin{proof}[Proof of Example~\ref{gwfn-example}] Present $A$ as $A=(K[x_1,\cdots,x_n]/I)_\fp$ for some $\fp \in \Spec(K[x_1,\cdots,x_n])$, and it suffices to assume $\fp = (x_1,\cdots,x_n)=(\underline{x})$, so $\fm = (\underline{x})$.

The map $A\to B$ is faithfully flat with regular closed fiber. Therefore the ring $(B,\fn)$ is weakly $F$-nilpotent of dimension 3 and depth 2 by Theorem~\ref{Theorem F-nilpotent acends}. The ideal $\fm B'\subsetneq \fn$ is prime (as $B'/\fm B' \cong K[T]$ is a domain) and $(B')_{\fm B'}$ is not Cohen-Macaulay as $A$ is not Cohen-Macaulay, so $B'$ is not generalized Cohen-Macaulay. Consequently, $B$ is not generalized Cohen-Macaulay and so we must have $H^2_\fn(B)$ is not of finite length.

We let \[ 
R' = \{f(x_1,\cdots, x_n, T) \in B' \mid f(0,\cdots,0,0)=f(0,\cdots,0,1) \}.
\] 
Consider the ideal $\mathfrak{c} = \Ann_{R'}(B'/R')$, an ideal common to both $R'$ and $B'$. One observes that $\mathfrak{c} B' = (\underline{x},T(T-1)) \subset \fn$ and $\mathfrak{c}$ is a maximal ideal of $R'$. Set $(R,\mathfrak{c}) =(R'_\mathfrak{c},\mathfrak{c})$ so that there are local inclusions
\[ 
(A,\fm) \subset (R,\mathfrak{c}) \subset (B,\fn).
\]
Note that $T-1$ is a unit in $B$ so $\mathfrak{c} B = \fn$. Moreover, $R$ is of dimension 3 since $R\subset B$ is module-finite.

There is a short exact sequence of $R$-modules: 
\begin{center}
\begin{tikzcd}
0 \arrow{r} & R \arrow{r} & B \arrow{r} & B/R \arrow{r} & 0
\end{tikzcd}
\end{center} 
and therefore there is a long exact sequence 
\begin{center}
\begin{tikzcd}
0 \arrow{r} & B/R \arrow{r} & H^1_\mathfrak{c}(R) \arrow{r} & 0 \arrow{r} & 0 \arrow{r} & H^2_\mathfrak{c}(R) \arrow{r} & H^2_\fn(B) \arrow{r} & 0
\end{tikzcd}.
\end{center}

The Frobenius action on $B/R$ is given by $b+R \mapsto b^{p} + R$, and $T^{p^e} +R \neq R$ for all $e \in \mathbb{N}$, so $H^1_\mathfrak{c}(R) \cong B/R$ is not $F$-nilpotent but is finite length (since $\Ann(B/R)=\mathfrak{c}$). Furthermore $H^2_\mathfrak{c}(R) \cong H^2_\fn(B)$ is nilpotent but is not finite length. 
\end{proof}

\section{Open loci results}\label{Section open results}

Theorem~\ref{Open loci results} is a result likely already understood by experts in the $F$-finite scenario, see \cite[Lemma~2.3]{SrinivasTakagi}. Nevertheless, we present a complete proof of Theorem~\ref{Open loci results} in the $F$-finite case for sake of completeness, as well as the essentially of finite type over an excellent local ring scenario. We begin with a consequence of Proposition~\ref{Proposition Frobenius is just Grothendieck Trace}.

\begin{lemma}
\label{Lemma Frobenius closure of the 0 module}
Let $R$ be a locally equidimensional $F$-finite ring of prime characteristic $p>0$ and let $\omega_R^\bullet$ be a normalized dualizing complex of $R$. There exists a quotient $N$ of $\omega_R:=H^{-d}(\omega_R^\bullet)$ so that for each $\fp\in \Spec(R)$ the $R_\fp$-Matlis dual of $N_\fp$ is $0^F_{H^{\Height(\fp)}_{\fp R_\fp}(R_\fp)}$. 
\end{lemma}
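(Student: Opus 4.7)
The plan is to take $N$ to be the cokernel of a single iterate of the degree $-d$ component of the Cartier map on $\omega_R^\bullet$, chosen large enough that this cokernel already captures the Frobenius closure of zero after localizing at every prime.

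First, for each $e\in\mathbb{N}$ I would consider the $R$-linear map $(F^e)^\vee(-d):F^e_*\omega_R\to\omega_R$ obtained as the degree $-d$ piece of $(F^e)^\vee:F^e_*\omega_R^\bullet\to\omega_R^\bullet$. The identity $F^{e+1}=F\circ F^e$ produces a factorization $(F^{e+1})^\vee(-d)=(F^e)^\vee(-d)\circ F^e_*((F)^\vee(-d))$, so the submodules $\operatorname{image}((F^e)^\vee(-d))\subseteq\omega_R$ form a descending chain in $e$. Applying \cite[Proposition~2.14]{BlickleBockle} to the coherent Cartier module appearing in degree $-d$ of $\omega_R^\bullet$, precisely as in the proof of Corollary~\ref{Corollary finite HSL number}, yields an integer $e_0$ for which $\operatorname{image}((F^e)^\vee(-d))=\operatorname{image}((F^{e_0})^\vee(-d))$ for all $e\geq e_0$.

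Next, I would set $N:=\omega_R/\operatorname{image}((F^{e_0})^\vee(-d))$, which is visibly a quotient of $\omega_R$. For any $\fp\in\Spec(R)$ with $h:=\Height(\fp)$, flat localization commutes with cokernels and gives
\[
N_\fp=\omega_{R_\fp}/\operatorname{image}\bigl((F^{e_0})^\vee(-d)_\fp\bigr).
\]
The $R_\fp$-Matlis dual of the right exact sequence $F^{e_0}_*\omega_{R_\fp}\xrightarrow{(F^{e_0})^\vee(-d)_\fp}\omega_{R_\fp}\to N_\fp\to 0$ then reads as a left exact sequence whose leftmost term is $N_\fp^\vee$. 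By Proposition~\ref{Proposition Frobenius is just Grothendieck Trace} the dualized map is the $e_0$th Frobenius action $F^{e_0}:H^h_{\fp R_\fp}(R_\fp)\to F^{e_0}_*H^h_{\fp R_\fp}(R_\fp)$, so $N_\fp^\vee$ is identified with its kernel.

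Finally, I would verify that $\ker(F^{e_0})=0^F_{H^h_{\fp R_\fp}(R_\fp)}$. Global stabilization of the images of $(F^e)^\vee(-d)$ survives localization at $\fp$, and Matlis duality translates this into stabilization of the ascending chain $\{\ker(F^e)\}_{e\geq 0}$ inside $H^h_{\fp R_\fp}(R_\fp)$ already at step $e_0$. Thus $e_0\geq \HSL_h(R_\fp)$ for every $\fp$, and $\ker(F^{e_0})=\bigcup_e\ker(F^e)=0^F_{H^h_{\fp R_\fp}(R_\fp)}$ as desired. The main technical point is this passage from global stabilization of images to a fiberwise Frobenius-closure identification; it is purely formal given the Matlis duality dictionary supplied by Proposition~\ref{Proposition Frobenius is just Grothendieck Trace} and the uniform stabilization supplied by \cite[Proposition~2.14]{BlickleBockle}.
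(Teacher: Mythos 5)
Your proof is correct and follows the same approach the paper intends: you take $N$ to be the cokernel of $(F^{e_0})^\vee(-d)$ for a uniformly stabilizing $e_0$ supplied by \cite[Proposition~2.14]{BlickleBockle}, then identify its localized Matlis dual with $\ker(F^{e_0})=0^F_{H^{\Height(\fp)}_{\fp R_\fp}(R_\fp)}$ via Proposition~\ref{Proposition Frobenius is just Grothendieck Trace}. The paper's own proof is considerably terser (it simply asserts that ``there exists an $e$'' and cites the proposition), but the stabilization step you spell out explicitly is exactly what justifies that assertion, just as in Corollary~\ref{Corollary finite HSL number}.
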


\begin{proof}
By Proposition~\ref{Proposition Frobenius is just Grothendieck Trace} there exists an $e$ so that $0^F_{H^d_{\fp R_\fp}(R_\fp)}$ is the $R_\fp$-Matlis dual of the localized cokernel of the Cartier linear map
\[
F^e_*H^{-d}(\omega_R^\bullet)\xrightarrow{(F^e)^\vee} H^{-d}(\omega_R^\bullet).
\]
\end{proof}

\begin{lemma}
\label{Lemma tight closure of the 0 module}
Let $R$ be a locally equidimensional $F$-finite ring of prime characteristic $p>0$ and let $\omega_R^\bullet$ be a normalized dualizing complex of $R$. Suppose further that $R$ admits a completely stable test element\footnote{A completely stable test element is an element $c\in R$ which serves as a test element of $R$, every localization of $R$, and every completion of $R$ with respect to an ideal.}, e.g. $R$ is reduced\footnote{See \cite[Theorem~5.10]{HochsterHunekeTransactions} for an explanation of why $F$-finite reduced rings admit a completely stable test element.}. Then there exists a quotient $M$ of $\omega_R:=H^{-d}(\omega_R^\bullet)$ so that for each $\fp\in \Spec(R)$ the $R_\fp$-Matlis dual of $M$ is $0^*_{H^{\Height(\fp)}_{\fp R_\fp}(R_\fp)}$. 
\end{lemma}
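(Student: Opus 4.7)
The plan is to mimic the proof of Lemma~\ref{Lemma Frobenius closure of the 0 module}, incorporating the completely stable test element $c$ as a twist on the Cartier linear map so that the resulting cokernel detects tight closure rather than Frobenius closure. The key global ingredient that makes a single choice suffice at every prime is the uniform test exponent supplied by Corollary~\ref{Finite test exponents for tight closure}.

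First I would apply Corollary~\ref{Finite test exponents for tight closure}(2) to fix a single integer $e \in \mathbb{N}$ which serves as a test exponent for tight closure with respect to $c$ for $0^*_{H^{\Height(\fp)}_{\fp R_\fp}(R_\fp)}$ at every $\fp \in \Spec(R)$ simultaneously. I would then form the $R$-linear composition
\[
\psi: F^e_* \omega_R \xrightarrow{\cdot c} F^e_* \omega_R \xrightarrow{(F^e)^\vee(-d)} \omega_R
\]
using the degree $-d$ piece of the evaluation-at-$1$ Cartier map on the normalized dualizing complex, and define $M := \omega_R / \im(\psi)$.

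To verify that $M$ has the claimed property, fix $\fp \in \Spec(R)$, localize, and note that the normalization conventions for $\omega_{R_\fp}^\bullet$ give $\omega_{R_\fp} = (\omega_R)_\fp$, so $M_\fp$ is the cokernel of $\psi_\fp$. Proposition~\ref{Proposition Frobenius is just Grothendieck Trace} identifies the $R_\fp$-Matlis dual of $(F^e)^\vee(-d)_\fp$ with the $e$-th Frobenius action $F^e: H^{\Height(\fp)}_{\fp R_\fp}(R_\fp) \to F^e_* H^{\Height(\fp)}_{\fp R_\fp}(R_\fp)$, and by naturality of Matlis duality, multiplication by $c$ dualizes to multiplication by $c$. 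Thus $\psi_\fp$ Matlis-dualizes to the composition
\[
H^{\Height(\fp)}_{\fp R_\fp}(R_\fp) \xrightarrow{F^e} F^e_* H^{\Height(\fp)}_{\fp R_\fp}(R_\fp) \xrightarrow{\cdot c} F^e_* H^{\Height(\fp)}_{\fp R_\fp}(R_\fp),
\]
whose kernel is exactly $0^*_{H^{\Height(\fp)}_{\fp R_\fp}(R_\fp)}$ by the choice of $e$. Since Matlis duality is an exact contravariant functor which exchanges submodules of Artinian modules with quotients of their duals, $M_\fp$ is the $R_\fp$-Matlis dual of $0^*_{H^{\Height(\fp)}_{\fp R_\fp}(R_\fp)}$, as desired.

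The principal obstacle is securing a single value of $e$ that simultaneously captures tight closure at every prime; without such uniformity there would be no hope of defining a single quotient of $\omega_R$ that encodes $0^*$ at every point. This uniformity is precisely the content of Corollary~\ref{Finite test exponents for tight closure}, which in turn relies on the uniform bound on HSL numbers furnished by Corollary~\ref{Corollary finite HSL number} together with the existence of a completely stable test element, both of which are present under our hypotheses.
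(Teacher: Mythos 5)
Your proof is correct and follows essentially the same approach as the paper's: both use the uniform test exponent supplied by Corollary~\ref{Finite test exponents for tight closure} to pre-compose the degree $-d$ Cartier map $(F^e)^\vee$ with multiplication by the completely stable test element $c$, take the cokernel of the resulting composition to define $M$, and then localize and Matlis dualize via Proposition~\ref{Proposition Frobenius is just Grothendieck Trace}. Your writeup is somewhat more explicit about the normalization convention $\omega_{R_\fp}=(\omega_R)_\fp$ and the exactness of Matlis duality, but the argument and the key citations are the same as in the paper.
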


\begin{proof}
Let $c\in R^\circ$ be a completely stable test element and $e_0$ a test exponent for $0^*_{H_{\fp R_\fp}(R_\fp)}$ with respect to $c$ for all $\fp \in \Spec(R)$ (see Corollary~\ref{Finite test exponents for tight closure}). Then $0^*_{H^{\Height(\fp)}_{\fp R_\fp}(R_\fp)}$ is realized as the kernel of the following composition of maps:
\[
H^{\Height(\fp)}_{\fp R_\fp}(R_\fp)\xrightarrow{F^{e_0}}F^e_*H^{\Height(\fp)}_{\fp R_\fp}(R_\fp)\xrightarrow{\cdot F^e_*c} F^e_*H^{\Height(\fp)}_{\fp R_\fp}(R_\fp).
\]
By Proposition~\ref{Proposition Frobenius is just Grothendieck Trace} the kernel of the above composition of maps is the $R_\fp$-Matlis dual of the localization at $\fp$ of the cokernel of the composition
\[
F^e_*H^{-d}(\omega_R^\bullet)\xrightarrow{\cdot F^e_*c} F^e_*H^{-d}(\omega_R^\bullet)\xrightarrow{(F^e)^\vee}H^{-d}(\omega_R^\bullet).
\]
\end{proof}

The above lemmas and the results of Section~\ref{Section flat base change results} provide us with a proof of Theorem~\ref{Open loci results}.

\begin{theorem}
\label{Theorem Open}
Let $R$ be a ring of prime characteristic $p>0$ which is either $F$-finite or essentially of finite type over an excellent local ring. Then
\begin{enumerate}
\item $\{\fp\in \Spec(R)\mid R_\fp\mbox{ is weakly $F$-nilpotent}\}$ is an open subset of $\Spec(R)$;
\item $\{\fp\in \Spec(R)\mid R_\fp\mbox{ is $F$-nilpotent}\}$ is an open subset $\Spec(R)$.
\end{enumerate} 
\end{theorem}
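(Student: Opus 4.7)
In the $F$-finite case, the plan is to exhibit each locus as the complement of the support of finitely many coherent $R$-modules, which makes it manifestly open. Set $d=\dim R$, and invoke Corollary~\ref{Corollary finite HSL number} to choose a single $e_0$ with $\HSL(R_\fp)\leq e_0$ for every $\fp\in\Spec R$. For each $0\leq i<d$, set
\[
I_i := \im\!\Bigl((F^{e_0})^\vee(-i)\colon F^{e_0}_*H^{-i}(\omega_R^\bullet)\longrightarrow H^{-i}(\omega_R^\bullet)\Bigr).
\]
By Proposition~\ref{Proposition Frobenius is just Grothendieck Trace} and Matlis duality, $R_\fp$ is weakly $F$-nilpotent if and only if $(I_i)_\fp=0$ for every $i\in\{d-\Height(\fp),\dots,d-1\}$; the remaining indices contribute nothing because $H^{-i}(\omega_{R_\fp}^\bullet)=0$ outside the normalized range. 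Thus the weakly $F$-nilpotent locus equals $\Spec R\setminus\bigcup_{i=0}^{d-1}\Supp(I_i)$, which is open. For the $F$-nilpotent locus, Lemma~\ref{Lemma Frobenius closure of the 0 module} and Lemma~\ref{Lemma tight closure of the 0 module} (after a routine reduction to the reduced case so that a completely stable test element is available) produce coherent quotients $N$ and $M$ of $\omega_R$ whose pointwise Matlis duals recover $0^F$ and $0^*$ in top local cohomology. The inclusion $0^F\subseteq 0^*$ dualizes to a canonical surjection $M\twoheadrightarrow N$; letting $K$ denote its kernel, one identifies the $F$-nilpotent locus as the open set $\Spec R\setminus\bigl(\bigcup_{i=0}^{d-1}\Supp(I_i)\cup\Supp(K)\bigr)$.

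In the essentially of finite type case, the plan is to reduce to the $F$-finite case via a $\Gamma$-construction and to transfer openness by faithfully flat descent. Write $R$ as essentially of finite type over an excellent local ring $A$. Since $A$ is excellent, $A\to\widehat A$ is faithfully flat with geometrically regular fibers, and applying the Hochster-Huneke $\Gamma$-construction to $\widehat A$ yields a faithfully flat extension $\widehat A\to\widehat A^\Gamma$ which is $F$-finite with geometrically regular fibers. The ring $\tilde R:=R\otimes_A\widehat A^\Gamma$ is then $F$-finite, and $\phi\colon R\to\tilde R$ is faithfully flat with geometrically regular fibers by composition and base change. By the first paragraph, the corresponding locus $\tilde U\subseteq\Spec\tilde R$ is open. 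For every $\fp\in\Spec R$ and every $\fq\in\Spec\tilde R$ with $\fq\cap R=\fp$, the induced local map $R_\fp\to\tilde R_\fq$ is faithfully flat with regular (hence Cohen-Macaulay) closed fiber, so Theorem~\ref{theorem F-nilpotent descends} and Theorem~\ref{Theorem F-nilpotent acends} combine to yield a two-sided equivalence of ``(weakly) $F$-nilpotent'' along $R_\fp\to\tilde R_\fq$. Hence $\tilde U=\phi^{-1}(U)$, where $U\subseteq\Spec R$ is the corresponding downstairs locus, and since $\Spec\tilde R\to\Spec R$ is fpqc, descent of open subschemes along fpqc morphisms implies $U$ is open.

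The main technical obstacle is verifying the common-test-element hypothesis required by the $F$-nilpotent ascent result (Theorem~\ref{Theorem F-nilpotent acends}(2)) used in the second paragraph: one must produce a test element common to $R/\sqrt{0}$ and $\tilde R\otimes_R R/\sqrt{0}$. After reducing to reduced $R$, such a test element may be extracted from a completely stable test element of the $F$-finite reduced ring $\tilde R$ together with standard persistence results for test elements along faithfully flat maps with geometrically regular fibers. A parallel reducedness reduction is needed in the first paragraph when invoking Lemma~\ref{Lemma tight closure of the 0 module}, and is handled by comparing $H^d_\fm(R)$ with $H^d_\fm(R/\sqrt{0})$ via the short exact sequence $0\to\sqrt{0}\to R\to R/\sqrt{0}\to 0$, noting that the Frobenius action on $H^i_\fm(\sqrt{0})$ is nilpotent.
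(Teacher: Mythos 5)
Your treatment of the $F$-finite case matches the paper's argument in substance: you use the uniform bound of Corollary~\ref{Corollary finite HSL number} to realize the Cartier maps as fixed coherent data, cut out the weakly $F$-nilpotent locus as the complement of the supports of the images $I_i$, and then handle $F$-nilpotence by passing to the kernel $K$ of $M\twoheadrightarrow N$. (The paper also takes care, after reducing to the reduced case, to localize so that $R$ becomes locally equidimensional before invoking Proposition~\ref{Proposition Frobenius is just Grothendieck Trace}; you should be explicit about that step, since the proposition and Corollary~\ref{Corollary finite HSL number} both carry the locally equidimensional hypothesis, and the degree-range argument silently depends on it.)

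The non-$F$-finite case has a genuine gap. You claim that $\widehat A\to\widehat A^\Gamma$, and hence $R\to\tilde R=R\otimes_A\widehat A^\Gamma$, has geometrically regular fibers. That is false: the $\Gamma$-construction is a \emph{purely inseparable} faithfully flat extension, and purely inseparable residue field extensions are not geometrically reduced, let alone geometrically regular. As a consequence you cannot invoke Theorem~\ref{Theorem F-nilpotent acends}(2) along $R_\fp\to\tilde R_\fq$, so the $F$-nilpotent ascent direction of your ``two-sided equivalence'' is unsupported, and the identification $\tilde U=\phi^{-1}(U)$ fails in the $F$-nilpotent case. (The weakly $F$-nilpotent part survives, since Theorem~\ref{Theorem F-nilpotent acends}(1) and Theorem~\ref{theorem F-nilpotent descends}(1) only need Cohen–Macaulay closed fibers.) The paper sidesteps this by factoring the base change into two steps and using different ascent results for each: Theorem~\ref{Theorem F-nilpotent acends} for $R\to R\otimes_A\widehat A$ (which genuinely has geometrically regular fibers because $A$ is excellent), and Theorem~\ref{faithfully flat purely inseparable} for the purely inseparable step $R\otimes_A\widehat A\to\tilde R$. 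Your ``main technical obstacle'' paragraph about common test elements is addressing a secondary issue; the primary missing ingredient is the purely inseparable ascent result, and once you split the map into its two pieces and invoke Theorem~\ref{faithfully flat purely inseparable}, the test-element bookkeeping also becomes unnecessary for that step. As an aside, the fpqc-topology argument you give for transferring openness is correct but heavier than needed: faithfully flat maps of Noetherian rings are open on $\Spec$, so one can simply take images of open sets, as the paper does.
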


\begin{proof}
Suppose first that $R$ is $F$-finite. We may assume $R$ is a reduced ring by \cite[Proposition~2.8 (2)]{PolstraQuy}. Denote by $\omega_R^\bullet$ a dualizing complex of $R$. Let $\fp\in \Spec(R)$ and suppose that $R_\fp$ is weakly $F$-nilpotent. By \cite[Proposition~2.8 (3) and Remark~2.9]{PolstraQuy}, $R_\fp$ is an equidimensional local ring. Thus, by replacing $R$ by a suitable localization preserving $\fp$, we may assume $R$ is locally equidimensional (see \cite[Proof of Corollary~5.3]{PolstraTransactions}). Suppose that $R$ has Krull dimension $d$. We shift the grading of $\omega_R^\bullet$ so that $\omega_R^\bullet$ is normalized. Then $R_\fp$ is weakly $F$-nilpotent if and only if for all $i<\Height(\fp)$ the Cartier linear map $ (F^e_\fp)^\vee(-i)=0$ for all $e\gg 0$. Equivalently, $\fp$ is in the weakly $F$-nilpotent locus of $R$ if and only if for all $i>0$ one has that $(F^e)^\vee(-d+i)$ localizes to the $0$-map at $\fp$ for all $e\gg 0$. The vanishing locus of a map of finitely generated modules is an open set and therefore the weakly $F$-nilpotent locus of $R$ is indeed open.

Now suppose $\fp\in \Spec(R)$ and $R_\fp$ is $F$-nilpotent. As before we may assume $R$ is reduced and locally equidimensional. Let $N$ be as in Lemma~\ref{Lemma Frobenius closure of the 0 module} and $M$ as in Lemma~\ref{Lemma tight closure of the 0 module}. Then $N\subseteq M$ and for each $\fp$ the $R_\fp$-Matlis dual of $(M/N)_\fp$ is $0^*_{H^{\Height(\fp)}_{\fp R_\fp}(R_\fp)}/0^F_{H^{\Height(\fp)}_{\fp R_\fp}(R_\fp)}$. The collection of primes $\{\fp\in \Spec(R)\mid (M/N)_\fp=0\}$ is open, the intersection of this open set with the weakly $F$-nilpotent locus is open, and this open set defines the $F$-nilpotent locus.

Now suppose that $R$ is essentially of finite type over an excellent local ring $A$. If $\widehat{A}$ is the completion of $A$ at its maximal ideal then $R\to \widehat{A}\otimes_A R$ is faithfully flat with geometrically regular fibers. The induced map $\Spec(\widehat{A}\otimes_A R)\rightarrow\Spec R$ is open, hence if the (weakly) $F$-nilpotent locus of $\Spec(R\otimes_A\widehat{A})$ is open then so is the (weakly) $F$-nilpotent locus of $\Spec(R)$. By Theorem~\ref{Theorem F-nilpotent acends} we may assume $R$ is essentially of finite type over a complete local ring. Hence there exists a faithfully flat and purely inseparable map $R\to R^\Gamma$ where $R^\Gamma$ is an $F$-finite ring (see \cite[Section~6]{HochsterHunekeTransactions}). Therefore the subset of $\Spec(R)$ of (weakly) $F$-nilpotent primes is indeed open by Theorem~\ref{faithfully flat purely inseparable} and the $F$-finite case of the theorem.
 \end{proof}

\begin{remark}
\begin{enumerate}
    \item One should not expect the conclusions of Theorem \ref{Theorem Open} to hold under more general hypotheses than those stated. Indeed, \cite[Proposition 2]{hochster1973} and faithfully flat descent for $F$-nilpotent (resp. weakly $F$-nilpotent) rings guarantees the existence of a locally excellent local Noetherian ring whose $F$-nilpotent (resp. weakly $F$-nilpotent) locus is not open. See \cite[Example 5.10]{DattaMurayama} for more details.
    \item In light of Proposition~\ref{proposition gwfn = wfn on punctured spectrum} and Example~\ref{gwfn-example}, one should not expect the generalized weakly $F$-nilpotent locus of a local ring of prime characteristic to be open in general.
\end{enumerate}
\end{remark}

We conclude this paper by showing that the property of being (weakly) $F$-nilpotent descends under arbitrary faithfully flat maps, thus concluding the proof of Theorem~\ref{Main Theorem Ascent}.

\begin{theorem}
\label{theorem F-nilpotent descends-non-CM}
Let $(R,\fm)\to (S,\fn)$ be a faithfully flat map of local rings of prime characteristic $p>0$. 
\begin{enumerate}
    \item If $S$ is weakly $F$-nilpotent then so is $R$.
    \item If $S$ is excellent and $F$-nilpotent then $R$ is $F$-nilpotent.
\end{enumerate}
\end{theorem}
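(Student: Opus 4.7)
The plan is to leverage the openness of the (weakly) $F$-nilpotent loci established in Theorem~\ref{Theorem Open} together with the descent result Theorem~\ref{theorem F-nilpotent descends}, which already handles the case of a Cohen-Macaulay closed fiber. The unifying observation is purely topological: in $\Spec(S)$ for any local ring $(S,\fn)$, the only open subset containing the closed point $\fn$ is all of $\Spec(S)$. Hence, once $(S,\fn)$ satisfies the hypotheses of Theorem~\ref{Theorem Open}, being (weakly) $F$-nilpotent forces $S_\fp$ to be (weakly) $F$-nilpotent for every $\fp\in \Spec(S)$.

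For part (2), $S$ is excellent, so it is essentially of finite type over the excellent local ring $S$ itself, placing it in the hypotheses of Theorem~\ref{Theorem Open}~(2). The $F$-nilpotent locus of $S$ is therefore open in $\Spec(S)$ and contains $\fn$, hence equals all of $\Spec(S)$. I then choose $\fp\in \Spec(S)$ minimal over $\fm S$. The induced map $R\to S_\fp$ is local (since $\fm S\subseteq \fp$), faithfully flat (its closed fiber $(S/\fm S)_\fp$ is a nonzero Artinian local ring), and has zero-dimensional, hence Cohen-Macaulay, closed fiber. Since $S_\fp$ is $F$-nilpotent by the openness argument, Theorem~\ref{theorem F-nilpotent descends}~(2) yields that $R$ is $F$-nilpotent.

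For part (1), the first step is to replace $S$ by its $\fn$-adic completion $\widehat{S}$. The completion map $S\to\widehat{S}$ is faithfully flat local with zero-dimensional (hence Cohen-Macaulay) closed fiber, so Theorem~\ref{Theorem F-nilpotent acends}~(1) shows $\widehat{S}$ is weakly $F$-nilpotent, and the composition $R\to\widehat{S}$ remains faithfully flat and local. Since $\widehat{S}$ is a complete Noetherian local ring, it is excellent, and the same argument used for part (2) (now with Theorem~\ref{Theorem Open}~(1)) shows that $\widehat{S}_\fp$ is weakly $F$-nilpotent for every $\fp\in \Spec(\widehat{S})$. Choosing $\fp$ minimal over $\fm \widehat{S}$ gives a faithfully flat local map $R\to \widehat{S}_\fp$ with zero-dimensional closed fiber, and Theorem~\ref{theorem F-nilpotent descends}~(1) finishes the argument.

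The main conceptual obstacle the argument navigates is the passage from an arbitrary faithfully flat local map to one with a manageable (here zero-dimensional) closed fiber. The elementary topological remark that no proper open subset of a local spectrum contains the closed point, applied to the open locus from Theorem~\ref{Theorem Open}, converts the abstract (weakly) $F$-nilpotence of $S$ (or $\widehat{S}$) into the much stronger statement that every localization inherits the property, which is precisely what is needed to invoke Theorem~\ref{theorem F-nilpotent descends}.
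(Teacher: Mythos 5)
Your proof is correct and takes essentially the same route as the paper: reduce to the excellent case (by completing in part (1), automatic in part (2)), use openness of the (weakly) $F$-nilpotent locus and the topological fact that any open neighborhood of the closed point in a local spectrum is the whole spectrum to get the property at a prime minimal over $\fm S$, then invoke Theorem~\ref{theorem F-nilpotent descends} since the closed fiber of $R\to S_\fp$ is zero-dimensional. The only cosmetic difference is that you explicitly justify the passage to $\widehat{S}$ via Theorem~\ref{Theorem F-nilpotent acends}~(1), whereas the paper cites the more general fact that weak $F$-nilpotence can be checked after completion.
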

\begin{proof}
The property of being weak $F$-nilpotent can be checked after completion. Therefore in statement (1) of the theorem we may assume $S$ is an excellent local ring. Let $\fp\in\Spec(S)$ be a prime ideal minimal over $\fm S$. If $S$ is either weakly $F$-nilpotent or $F$-nilpotent then so is $S_\fp$ respectively by Theorem~\ref{Theorem Open} with the stated assumptions on $S$ (cf \cite[Corollary~5.17]{PolstraQuy}). Replacing $S$ by $S_\fp$ we may assume $R\to S$ is faithfully flat with $0$-dimensional closed fiber. The result now follows by Theorem~\ref{theorem F-nilpotent descends}.
\end{proof}

\section{Acknowledgments}
We are grateful to Ian Aberbach, Takumi Murayama and Pham Hung Quy for comments on earlier drafts of this article. We thank Rankeya Datta for insightful conversations. The fourth named author is grateful to his advisor, Kevin Tucker, for his constant encouragement. We also thank the anonymous referee for valuable feedback on a previous version of this article.

\bibliographystyle{alpha}
\bibliography{References}

\end{document}